\theoremstyle{plain}
\newtheorem{theorem}{Theorem}[section]
\newtheorem{lemma}[theorem]{Lemma}
\newtheorem{proposition}[theorem]{Proposition}
\newtheorem{Bounded Diameter Lemma}[theorem]{Bounded Diameter Lemma}
\theoremstyle{definition}
\newcommand{\Hmm}[1]{\leavevmode{\marginpar{\tiny%
			$\hbox to 0mm{\hspace*{-0.5mm}$\leftarrow$\hss}%
			\vcenter{\vrule depth 0.1mm height 0.1mm width \the\marginparwidth}%
			\hbox to
			0mm{\hss$\rightarrow$\hspace*{-0.5mm}}$\\\relax\raggedright #1}}}
\DeclareFixedFont{\Acknowledgment}{OT1}{cmr}{bx}{n}{14pt}
\begin{document}
\numberwithin{equation}{section}
 \title{The Dirichlet problem for degenerate fully nonlinear elliptic equations on Riemannian manifolds}
\author{Ri-Rong Yuan\thanks{School of Mathematics, \,South China University of Technology, \,Guangzhou 510641, \,China \\ \indent \indent Email address:\,yuanrr@scut.edu.cn
}}
\date{}

\maketitle

\begin{abstract}
	
	We derive the existence of $C^{1,1}$-solutions to the Dirichlet problem for degenerate fully nonlinear elliptic equations on Riemannian manifolds under appropriate assumptions. 


\end{abstract}



 \section{Introduction}

 Let $(M,g)$ be a compact Riemannian manifold of dimension $n\geq2$
 with smooth boundary $\partial M$,  $\bar M:=M\cup\partial M$, 
  with the Levi-Civita connection $\nabla$. Let $\chi$ be a smooth symmetric $(0,2)$-tensor. Let
 $f$ be a symmetric function defined in an open symmetric convex cone  $\Gamma\subset\mathbb{R}^n$ containing positive cone, i.e.
 \[\Gamma_n:=\{\lambda\in \mathbb{R}^n: \mbox{ each component } \lambda_i>0\}\subseteq\Gamma\]
and with vertex at the origin  and with  boundary $\partial \Gamma\neq \emptyset$.
 
 In this paper,
 we consider the Dirichlet problem 
 \begin{equation}
 	\label{mainequ}
 	\begin{aligned}
 		\,& f(\lambda(\nabla^2 u+\chi))= \psi   \mbox{ in } M, \,& u=  \varphi  \mbox{ on }\partial M
 	\end{aligned}
 \end{equation}
with degenerate right-hand side
\begin{equation}
	\label{de-RHS}
	\begin{aligned} 
		\inf_M \psi=\sup_{\partial\Gamma}f,
	\end{aligned} 
\end{equation}
where $\lambda(\nabla^2 u+\chi)$ denote the eigenvalues of $\nabla^2 u+\chi$ with respect to $g$, and
$$\sup_{\partial \Gamma}f =\sup_{\lambda_{0}\in \partial \Gamma } \limsup_{\lambda\rightarrow \lambda_{0}}f(\lambda).$$  
 In addition, some standard assumptions are imposed as follows:
 \begin{equation}
 	\label{continuity1}
 	\begin{aligned}
 		f\in C^\infty(\Gamma)\cap C(\overline{\Gamma}), \mbox{ where } \overline{\Gamma} =\Gamma\cup\partial\Gamma,
 	\end{aligned}
 \end{equation}
 \begin{equation}
 	\label{elliptic}
 	\begin{aligned}
 		\,& f_i(\lambda):=\frac{\partial f}{\partial \lambda_{i}}(\lambda)> 0  \mbox{ in } \Gamma,\,&  \forall 1\leq i\leq n,
 	\end{aligned}
 \end{equation}
 \begin{equation}
 	\label{concave}
 	\begin{aligned}
 		f \mbox{ is  concave in } \Gamma.
 	\end{aligned}
 \end{equation}

 For $M=\Omega\subset\mathbb{R}^n$ and $\chi=0$, the Dirichlet problem for degenerate Monge-Amp\`ere equation  has been studied by  Guan  \cite{GuanP1997Duke} (with homogeneous boundary data) and  Guan-Trudinger-Wang \cite{GTW1999}, which was later extended by Ivochkina-Trudinger-Wang  \cite{ITW2004} to more general degenerate Hessian equations. 
 In a series of papers   
 \cite{HL2009CPAM,HL2011JDG,HL2019}, 
 Harvey-Lawson proposed the concepts of strict $F$ and $\widetilde{F}$ convexity assumptions
 on the boundary  then derived with Perron method the existence and uniqueness of continuous
 solutions   
  to the Dirichlet problem 
 which have broader forms than that of \eqref{mainequ}. 
See also the Survey \cite{HL2013Survey}.
 For more progress and open problems on degenerate nonlinear elliptic equations, we also refer the readers to  \cite{GuanP2004ICCM}.
 
%
 This paper is a complement to these works.
We derive $C^{1,1}$ weak solutions to the Dirichlet problem for degenerate equations on Riemannian
	 manifolds subject to 
\begin{equation}
	\label{bdry-assum1}
	\begin{aligned}
		(-\kappa_1,\cdots,-\kappa_{n-1})\in \overline{\Gamma}_\infty \mbox{ in } \partial M.
	\end{aligned}
\end{equation}
Here  $\kappa_1,\cdots,\kappa_{n-1}$ denote the principal curvatures of the boundary, and $\overline{\Gamma}_\infty$ is the closure of  
$$\Gamma_\infty=\left\{\lambda'\in\mathbb{R}^{n-1}: (\lambda',\lambda_n)\in\Gamma\right\}.$$

 First we present some notion. We say $u\in C^2(\bar M)$ is  \textit{admissible} if 
 \begin{equation}
 	\begin{aligned}
 		\lambda(\nabla^2 u+\chi)\in \Gamma \mbox{ in } \bar M. \nonumber
 	\end{aligned}
 \end{equation}
 We say 
 $\underline{u}$ is a subsolution of the Dirichlet problem \eqref{mainequ} if  
 \begin{equation}
 	\label{subsolution-1}
 	\begin{aligned} 
 		f(\lambda(\nabla^2\underline{u}+\chi))\geq\psi
 		\mbox{ in } \bar M,  \quad
 		\underline{u}=\varphi \mbox{ on } \partial M.
 	\end{aligned} 
 \end{equation}
Furthermore, it is called a strictly subsolution 
 if
 \begin{equation}
 	\label{subsolution-2}
 	\begin{aligned}
 		f(\lambda(\nabla^2\underline{u}+\chi))>\psi 
 		\mbox{ in } \bar M, 
 		\quad   \underline{u}=\varphi \mbox{ on } \partial M.
 \end{aligned}\end{equation}

 \begin{theorem}
 	\label{thm-existence}
 	Let $(M,g)$ be a compact Riemannian manifold 
 	of dimension $n\geq2$ 
 	with smooth boundary satisfying \eqref{bdry-assum1}.
 	Suppose \eqref{continuity1}-\eqref{concave}  hold. For the
 	$\varphi\in C^{2,1}(\partial M)$, $\psi\in C^{1,1}(\bar M)$ satisfying
 	\eqref{de-RHS}, we assume that there exists a $C^{2,1}$ admissible strictly subsolution to the Dirichlet problem \eqref{mainequ}.
Then the Dirichlet problem  admits a weak solution $u\in C^{1,1}(\bar M)$ with 
 	$$\lambda(\nabla^2 u+\chi)\in   \overline{\Gamma}  \mbox{ in } \bar M, \quad \Delta u\in L^\infty(\bar M).$$
 	
 \end{theorem}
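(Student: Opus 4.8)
The plan is to obtain the degenerate solution as a limit of solutions to non-degenerate approximating problems, exploiting the fact that the degeneracy occurs only at the level of the right-hand side while the structural conditions \eqref{continuity1}--\eqref{concave} on $f$ and the geometric condition \eqref{bdry-assum1} on $\partial M$ are preserved under perturbation. Concretely, for $t\in(0,1]$ set $\psi_t := \psi + t$, so that $\inf_M \psi_t = \sup_{\partial\Gamma} f + t > \sup_{\partial\Gamma} f$, and consider the Dirichlet problem $f(\lambda(\nabla^2 u_t + \chi)) = \psi_t$ in $M$, $u_t = \varphi$ on $\partial M$. The given $C^{2,1}$ admissible strictly subsolution $\underline u$ of \eqref{mainequ} satisfies $f(\lambda(\nabla^2\underline u + \chi)) > \psi$, and since $\bar M$ is compact and $\underline u\in C^2$, there is $\delta_0>0$ with $f(\lambda(\nabla^2\underline u+\chi)) \ge \psi + \delta_0$; hence for all $t\le\delta_0$ the same $\underline u$ is a (non-strict) subsolution of the $t$-problem. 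One then invokes the standard existence theory for the non-degenerate Dirichlet problem on Riemannian manifolds with a subsolution (the boundary condition \eqref{bdry-assum1} being exactly what is needed in place of strict geometric convexity of $\partial M$) to produce an admissible $u_t\in C^\infty(M)\cap C^{2,1}(\bar M)$ solving the $t$-problem, for each $t\in(0,\delta_0]$.

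The heart of the matter is then a set of a priori estimates for $u_t$ that are \emph{uniform in $t$} up to the second order: $\|u_t\|_{C^0(\bar M)}$, $\|\nabla u_t\|_{C^0(\bar M)}$, and $\|\nabla^2 u_t\|_{C^0(\bar M)}$ all bounded independently of $t$. The $C^0$ bound is immediate from comparison: $\underline u \le u_t$ by the comparison principle for the $t$-problem, while an upper barrier comes either from solving the linear problem with the same boundary data or from a standard construction, giving $\underline u \le u_t \le \bar h$. The gradient estimate splits into an interior part and a boundary part; the boundary gradient estimate uses $\underline u$ as a lower barrier and a suitable upper barrier adapted to \eqref{bdry-assum1}, and the interior gradient estimate follows from the concavity of $f$ and the presence of a subsolution by a now-classical Bernstein-type argument, all with constants depending only on the geometry, $\chi$, $\varphi$, $\|\psi\|_{C^1}$ and $\underline u$ — crucially not on the modulus of ellipticity, which degenerates as $t\to0$. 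The second-order estimate is the delicate one: the global bound $\sup_{\bar M}|\nabla^2 u_t| \le C(1 + \sup_{\partial M}|\nabla^2 u_t|)$ is proved by differentiating the equation twice and using concavity, and the boundary second-derivative estimate is where \eqref{bdry-assum1} and the $C^{2,1}$ regularity of $\varphi$, $\psi$, $\underline u$ enter decisively, through the construction of barriers controlling the double-normal derivative; here one must be careful that the bound on $\Delta u_t$ (equivalently, on the full Hessian, since $\lambda(\nabla^2 u_t+\chi)\in\Gamma\subseteq\Gamma$ which lies in a half-space so that a one-sided trace bound plus the equation controls everything) does not blow up. I expect this uniform boundary $C^2$ estimate under the weakened convexity hypothesis \eqref{bdry-assum1} to be the principal obstacle.

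Granting the uniform $C^{1,1}$ bound $\|u_t\|_{C^{1,1}(\bar M)} \le C$ independent of $t$, the conclusion follows by a compactness and stability argument. By Arzelà--Ascoli and weak-$*$ compactness of bounded sequences in $W^{2,\infty}$, there is a sequence $t_k\downarrow 0$ with $u_{t_k} \to u$ in $C^{1,\alpha}(\bar M)$ for every $\alpha<1$ and $\nabla^2 u_{t_k} \rightharpoonup \nabla^2 u$ weakly-$*$ in $L^\infty$, so $u\in C^{1,1}(\bar M)$ with $\Delta u\in L^\infty(\bar M)$ and $u=\varphi$ on $\partial M$. Since $\lambda(\nabla^2 u_{t_k}+\chi)\in\Gamma$ and the eigenvalues depend continuously on the matrix, passing to the limit (using, e.g., that $\Gamma$ is closed under the weak limit in the sense that a.e.\ limit of admissible Hessians has eigenvalues in $\overline\Gamma$) yields $\lambda(\nabla^2 u+\chi)\in\overline\Gamma$ a.e.\ in $\bar M$. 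Finally, the equation is recovered in the weak sense: on the open set where the Hessian is nondegenerate one passes to the limit directly using continuity of $f$; globally one checks that $u$ is a viscosity solution of $f(\lambda(\nabla^2 u+\chi)) = \psi$, using \eqref{de-RHS} to handle exactly the locus where $\lambda(\nabla^2 u+\chi)\in\partial\Gamma$, where the defining relation $\inf_M\psi = \sup_{\partial\Gamma} f$ makes the degenerate equation consistent with the boundary behavior of $f$. This last identification is routine once the estimates are in hand, so the entire weight of the proof rests on the $t$-independent second-order estimates described above.
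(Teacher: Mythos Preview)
Your approximation scheme and compactness argument match the paper's strategy exactly: perturb $\psi$ to make the problem nondegenerate, solve, and extract a $C^{1,1}$ limit using estimates independent of $(\delta_{\psi,f})^{-1}$. The gap is in how you propose to obtain the uniform gradient estimate.

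You claim the interior gradient estimate follows from a ``now-classical Bernstein-type argument'' with constants independent of the modulus of ellipticity. This is precisely the obstacle the paper is written to circumvent. As the introduction explains, the known direct gradient estimates (Li, Urbas, Guan) either require extra curvature or structural hypotheses, or---even after one verifies \eqref{key2-yuan} via \eqref{addistruc}---rely on the nondegenerate condition \eqref{nondegenerate} and therefore produce constants depending on $(\delta_{\psi,f})^{-1}$. A direct Bernstein argument on a general Riemannian manifold, with no sign assumption on sectional curvature and only \eqref{continuity1}--\eqref{concave}, does not obviously yield a bound uniform in $t$.

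The paper's route reverses your order of estimates. It first proves the \emph{quantitative} boundary estimate of Theorem~\ref{thm-1},
\[
\sup_{\partial M}\Delta u \le C\bigl(1+\sup_M|\nabla u|^2\bigr),
\]
with $C$ independent of $(\delta_{\psi,f})^{-1}$; this is where condition \eqref{bdry-assum1}, the subsolution, Lemma~\ref{yuan's-quantitative-lemma}, and the characterization of \eqref{addistruc} in Lemma~\ref{lemma3.4} (itself forced by \eqref{continuity1}--\eqref{concave} via Lemma~\ref{lemma1-con-addi}) all enter. This is combined with Sz\'ekelyhidi's interior second-order estimate $\sup_M\Delta u\le C(1+\sup_M|\nabla u|^2+\sup_{\partial M}|\Delta u|)$ to obtain \eqref{quantitative-second-estimate}. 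Only \emph{then} is the gradient bound derived, and not by Bernstein but by a Liouville-type blow-up argument (Dinew--Ko{\l}odziej, Sz\'ekelyhidi): if $\sup_M|\nabla u_{t_k}|$ were unbounded one would rescale to produce a nonconstant entire admissible function on $\mathbb{R}^n$ with bounded Hessian, contradicting the relevant Liouville theorem. So your identification of the boundary $C^2$ estimate as the principal obstacle is correct, but its role is deeper than you indicate: its quantitative form, bounding $\Delta u$ on $\partial M$ by the \emph{interior} gradient rather than by an absolute constant, is what makes the gradient estimate possible at all.
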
 
 
	
 Condition \eqref{elliptic} ensures \eqref{mainequ} to be elliptic at any admissible solutions,
 while \eqref{concave} implies that the operator $F(A)=f(\lambda(A))$ is concave with respect to $A$ when $\lambda(A)\in\Gamma$. 
 Consequently, according to Evans-Krylov theorem \cite{Evans82,Krylov83} and classical Schauder theory, higher order estimates for  {admissible} solutions follow from 
 \begin{equation}
 	\label{estimate00}
 	|u|_{C^{2}(\bar M)}\leq C.
 \end{equation}
 
 When $\chi=0$ and $M=\Omega$ is a smooth bounded domain in $\mathbb{R}^n$, the estimate \eqref{estimate00} was established
 by Caffarelli-Nirenberg-Spruck \cite{CNS3}  for the Dirichlet problem under assumptions that the principal curvatures $\kappa_1,\cdots,\kappa_{n-1}$ of $\partial\Omega$ satisfy
 \begin{equation}
 	\label{Gamma-1}
 	\begin{aligned}
 		(\kappa_1,\cdots,\kappa_{n-1})\in \Gamma_\infty  \nonumber
 	\end{aligned}
 \end{equation}
 and $f$  satisfies proper assumptions   including 
 a unbounded condition. 
The bounded case  has been further studied by Trudinger \cite{Trudinger95}   on $\Omega\subset\mathbb{R}^n$. 
 On general closed Riemannian manifolds,  the estimate  \eqref{estimate00} for equation \eqref{mainequ} with $\chi=g$ has been obtained by Li \cite{LiYY1990} when the underlying manifold admits nonnegative sectional curvature, by Urbas \cite{Urbas2002} when replacing nonnegative sectional curvature assumption by certain extra assumptions on $f$ including 
 \begin{equation}
 	\label{key2-yuan}
 	\begin{aligned}
 		f_j(\lambda)\geq \delta \sum_{i=1}^n f_i(\lambda) \mbox{ if } \lambda_j\leq 0.
 	\end{aligned}
 \end{equation}
The Dirichlet problem \eqref{mainequ} 
 has been studied by Guan \cite{Guan12a}, where he imposed 
  either one of such two assumptions, beyond the subsolution assumption.
We shall remark  that in \cite{LiYY1990,Urbas2002,Guan12a}, condition \eqref{key2-yuan} 
 and the assumption of nonnegative sectional curvature  
 are only used to prove gradient estimate.
 The author \cite{yuan-PUE-conformal}\renewcommand{\thefootnote}{\fnsymbol{footnote}}\footnote{The paper  \cite{yuan-PUE-conformal} is essentially extracted from 
 	[arXiv:2011.08580] and [arXiv:2101.04947].} proposed partial uniform ellipticity 
 then confirm the key condition \eqref{key2-yuan} when
 \begin{equation}
 	\label{addistruc}
 	\begin{aligned}
 		\lim_{t\rightarrow +\infty}f(t\lambda)>f(\mu) \mbox{ for any } \lambda, \mbox{ }\mu\in \Gamma,
 	\end{aligned}
 \end{equation} 
thereby 
 extending the a priori
 estimates to a large amount of
  Hessian type equations on Riemannian manifolds.
 (See \cite{yuan2022levelset} for more extension). 
However, in the literature cited above,  the right-hand sides of equations are assumed to satisfy a nondegenerate condition 
 \begin{equation}
 	\label{nondegenerate}
 	\inf_{M} \psi >\sup_{\partial \Gamma} f.
 \end{equation}
Consequently, the estimates established there don't apply to degenerate equations.
 
 In order to 
 prove Theorem \ref{thm-existence}, besides the gradient estimate mentioned above, the other key issue is to bound the second order derivatives at the boundary in terms of a constant depending not on $(\delta_{\psi,f})^{-1}$, where
 $$\delta_{\psi,f}=\inf_M\psi-\sup_{\partial\Gamma}f.$$ 
Such a boundary estimate is fairly delicate.
To this end, following some idea from  \cite{yuan2021-2}\renewcommand{\thefootnote}{\fnsymbol{footnote}}\footnote{The paper  \cite{yuan2021-2} is essentially extracted from
	[arXiv:2203.03439] and the first parts of  
	[arXiv:2001.09238; arXiv:2106.14837].}, we achieve such two ingredients simultaneity through deriving a quantitative  boundary estimate. 
It is very different from previous works 
 	 \cite{GuanP1997Duke,GTW1999,ITW2004}.
 
 \begin{theorem}
 	\label{thm-1}
 	Assume that $(f,\Gamma)$ satisfies  \eqref{elliptic}, \eqref{concave} and \eqref{addistruc}.
 	Let $(M,g)$ be a compact Riemannian manifold with smooth boundary subject to \eqref{bdry-assum1}. Let $\varphi\in C^\infty(\partial M)$ and $\psi\in C^\infty(\bar M)$ satisfy \eqref{nondegenerate}. 
 	Suppose in addition that 
 	the Dirichlet problem \eqref{mainequ} admits a $C^2$-admissible subsolution $\underline{u}\in C^2(\bar M)$ satisfying 
 	\eqref{subsolution-1}.
 	Then for any admissible solution $u\in C^3(M)\cap C^2(\bar M)$ to  \eqref{mainequ}, we have 
 	\begin{equation}
 		\label{quantitative-boundary-estimate}
 		\begin{aligned}
 			\sup_{\partial M}\Delta u \leq C(1+\sup_{M}|\nabla u|^2) 
 		\end{aligned}
 	\end{equation}
 	where
 	$C$ is a constant depending not on $(\delta_{\psi,f})^{-1}$.
 	
 \end{theorem}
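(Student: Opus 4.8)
The plan is to estimate $\nabla^{2}u$ at the point $x_{0}\in\partial M$ where $\Delta u|_{\partial M}$ attains its maximum, following the classical scheme of Caffarelli--Nirenberg--Spruck \cite{CNS3} and Guan \cite{Guan12a} but organising every constant so that it depends only on $(M,g,\partial M,\chi,\psi,\varphi,\underline{u})$ and on $1+\sup_{M}|\nabla u|^{2}$, and \emph{never} on $(\delta_{\psi,f})^{-1}$. Near $x_{0}$ choose a local orthonormal frame $e_{1},\dots,e_{n-1},e_{n}=\nu$ with $e_{1},\dots,e_{n-1}$ tangent to $\partial M$ and $\nu$ the interior unit normal, and set $A=\nabla^{2}u+\chi$, $a_{ij}=A(e_{i},e_{j})$. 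Since $\Delta u(x_{0})=\sum_{\alpha<n}a_{\alpha\alpha}+a_{nn}-\operatorname{tr}\chi(x_{0})$, it suffices to control the tangential entries $a_{\alpha\beta}(x_{0})$ ($\alpha,\beta<n$), the mixed entries $a_{\alpha n}(x_{0})$, and the double normal entry $a_{nn}(x_{0})$. The first of these is immediate: differentiating $u=\varphi$ twice along $\partial M$ and using the Gauss formula yields $|a_{\alpha\beta}(x_{0})|\le C(1+|\nabla u(x_{0})|)$, with $C$ depending only on $\varphi$, $\chi$ and the second fundamental form of $\partial M$.

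For the mixed entries, let $\mathcal{L}:=F^{ij}\nabla_{ij}$ be the linearisation of $F(A)=f(\lambda(A))$ at $u$, which is positive definite by \eqref{elliptic}, and put $\mathcal{F}:=\sum_{i}F^{ii}$. Concavity \eqref{concave} together with the subsolution inequality \eqref{subsolution-1} gives the gap-free inequality $\mathcal{L}(u-\underline{u})\le 0$, and differentiating the equation once in a tangential direction and commuting covariant derivatives gives $|\mathcal{L}(\nabla_{\gamma}(u-\varphi))|\le C(1+|\nabla u|)(1+\mathcal{F})$ (with $\varphi$ extended smoothly off $\partial M$), the curvature commutators and the $\nabla\psi,\nabla\chi$ terms costing only powers of $1+|\nabla u|$. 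On a boundary half-ball $B_{\delta}(x_{0})\cap M$ one runs the maximum principle with a Guan-type barrier built from $u-\underline{u}$, a concave function of the distance to $\partial M$, and $|x-x_{0}|^{2}$, the coefficients chosen in terms of $1+\sup_{M}|\nabla u|^{2}$ (and independently of $\delta_{\psi,f}$); applied to $\pm\nabla_{\gamma}(u-\varphi)$ and combined with a Hopf-type inequality at $x_{0}$, this yields $|a_{\alpha n}(x_{0})|\le C(1+\sup_{M}|\nabla u|^{2})$. Because the subsolution is not assumed strict, some care is needed so that the sign of $\mathcal{L}$ applied to the barrier is governed by $\mathcal{F}$ and $1+\sup_{M}|\nabla u|^{2}$ alone, with no positive lower bound on $\mathcal{F}$ and no appeal to the gap.

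The double normal bound is the crux. Put $R:=a_{nn}(x_{0})$; if $R\le C_{0}(1+\sup_{M}|\nabla u|^{2})$ for a suitable $C_{0}$ there is nothing to prove, so assume $R$ much larger. By the first two steps the tangential block of $A(x_{0})$ is bounded by $C(1+|\nabla u(x_{0})|)$ and the off-diagonal block by $C(1+\sup_{M}|\nabla u|^{2})$, so the eigenvalue perturbation lemma (cf.\ \cite{CNS3}) shows that the eigenvalues of $A(x_{0})$ are, up to an error negligible relative to $1+\sup_{M}|\nabla u|^{2}$ once $R$ is this large, the eigenvalues $\lambda'$ of the tangential block together with one eigenvalue comparable to $R$. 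Admissibility forces $(\lambda',R)\in\Gamma$, hence $\lambda'\in\overline{\Gamma}_{\infty}$; comparing $\nabla^{2}u+\chi$ with $\nabla^{2}\underline{u}+\chi$ along $\partial M$ keeps $\lambda'$ robustly inside $\Gamma_{\infty}$, and the curvature hypothesis \eqref{bdry-assum1} is precisely what makes this robustness compatible with $|\langle\nabla u,\nu\rangle|$ being large. Now invoke the structural consequence of \eqref{addistruc} established in \cite{yuan-PUE-conformal}: for $\lambda'\in\overline{\Gamma}_{\infty}$ one has $\lim_{t\to\infty}f(\lambda',t)=\sup_{\Gamma}f>f(\mu)$ for every $\mu\in\Gamma$, hence $\lim_{t\to\infty}f(\lambda',t)>\psi(x_{0})$. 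Therefore, once $R$ exceeds a threshold depending only on $\psi$, the data and $1+\sup_{M}|\nabla u|^{2}$ --- but not on $\delta_{\psi,f}$ --- the equation $f(\lambda(\nabla^{2}u+\chi))(x_{0})=\psi(x_{0})$ fails, a contradiction. Hence $a_{nn}(x_{0})\le C(1+\sup_{M}|\nabla u|^{2})$.

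Combining the three steps at $x_{0}$ gives $\sup_{\partial M}\Delta u=\Delta u(x_{0})\le C(1+\sup_{M}|\nabla u|^{2})$ with $C$ independent of $(\delta_{\psi,f})^{-1}$, which is \eqref{quantitative-boundary-estimate}. The essential obstacle is the third step: in the nondegenerate theory one keeps $\lambda(\nabla^{2}u+\chi)$ a fixed distance from $\partial\Gamma$ by using the gap $\delta_{\psi,f}$ and then exploits growth of $f$, which is unavailable when $\delta_{\psi,f}$ may shrink to zero; following \cite{yuan2021-2}, the remedy is to substitute the asymptotic monotonicity \eqref{addistruc} together with the boundary convexity \eqref{bdry-assum1}, and to make the threshold for the double normal derivative explicit and gap-free. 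The form of the bound --- linear in $\Delta u$, quadratic in $|\nabla u|$ --- is exactly what allows it to be combined afterwards with the interior second-order estimate and the gradient estimate.
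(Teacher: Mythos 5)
Your overall decomposition into tangential, mixed, and double-normal pieces follows the paper, and your treatment of the tangential and mixed parts is broadly in line with Section~3 (the Guan--Spruck type barrier built from $u-\underline u$, $\rho^2$ and $N\sigma^2-t\sigma$, and tangential differentiation of the equation). However, your argument for the double-normal bound contains a genuine gap.

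You invoke, as a ``structural consequence of \eqref{addistruc},'' the statement that for $\lambda'\in\overline\Gamma_\infty$ one has $\lim_{t\to\infty}f(\lambda',t)=\sup_\Gamma f$. This is not implied by \eqref{addistruc}. For example, take $\Gamma=\Gamma_n$ and $f(\lambda)=-\sum_{i=1}^n\lambda_i^{-1}$, which is smooth, strictly increasing and concave on $\Gamma_n$, with $\lim_{t\to\infty}f(t\lambda)=0>f(\mu)$ for all $\lambda,\mu\in\Gamma_n$, so \eqref{addistruc} holds; yet for $\lambda'\in\Gamma_{n-1}$, $\lim_{R\to\infty}f(\lambda',R)=-\sum_{\alpha<n}(\lambda'_\alpha)^{-1}<0=\sup_\Gamma f$. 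What you are sliding in is precisely the Caffarelli--Nirenberg--Spruck unboundedness condition $\lim_{R\to\infty}f(\lambda',R)=\sup_\Gamma f$ (a condition on the behaviour of $f$ along lines parallel to the $\lambda_n$-axis), and the whole point of the paper is to \emph{not} assume it. Moreover, even where such a limit exceeds $\psi(x_0)$, that alone would give a threshold for $R$ depending on how close $f(\lambda',R)$ is to that limit, and nothing in your argument makes that threshold uniform or, more importantly, independent of $\delta_{\psi,f}$ --- the eigenvalue perturbation step only keeps $\lambda'$ in a neighbourhood of $\Gamma_\infty$, and $\lambda'$ can drift arbitrarily close to $\partial\Gamma_\infty$ as $\delta_{\psi,f}\to 0$.

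The paper closes this gap differently, and your sketch misses the essential mechanism. Step~1 of Proposition \ref{proposition-quar-yuan1} uses the \emph{subsolution}, not the solution, to produce uniform constants $\varepsilon_0,R_0$ with $f(\underline{\mathfrak g}_{11}-\varepsilon_0,\dots,\underline{\mathfrak g}_{(n-1)(n-1)}-\varepsilon_0,R_0)\geq\psi$ and $(\underline{\mathfrak g}_{11}-\varepsilon_0,\dots,R_0)\in\Gamma$. Then the identity \eqref{puretangential2}, $\mathfrak g_{\alpha\beta}=\underline{\mathfrak g}_{\alpha\beta}+\nabla_n(u-\underline u)\nabla_{\alpha\beta}\sigma$, is turned into the matrix decomposition $A(R)=\underline H(R)+\nabla_n(u-\underline u)\bigl[B+\varepsilon_1 I(r_0/\varepsilon_1)\bigr]$, where $\nabla_n(u-\underline u)\geq 0$ and the boundary hypothesis \eqref{bdry-assum1} guarantees $B+\varepsilon_1 I(r_0/\varepsilon_1)\in\Gamma$. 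Lemma~\ref{lemma3.4} (the equivalence including \eqref{addistruc-2}) is then used to get $f(A(R))\geq f(\underline H(R))$, and the quantitative eigenvalue Lemma~\ref{yuan's-quantitative-lemma} applied to $\underline H(R_c)$ --- with $R_c$ explicitly built from $\sum_\alpha|\mathfrak g_{\alpha n}|^2$, $\underline{\mathfrak g}_{\alpha\alpha}$, $\varepsilon_0$, $R_0$, $r_0\nabla_n(u-\underline u)$ --- forces $f(A(R_c))>\psi$, hence $\mathfrak g_{nn}(x_0)<R_c$. Your sketch never exploits the subsolution in this quantitative way for the double-normal bound, and substituting the false limit property for it cannot be repaired without adding an unboundedness hypothesis on $f$.

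A smaller remark: in the mixed-derivative step, the gap-free constant in the paper is achieved by the dichotomy of Lemma~\ref{guan2014} (Case~I: $|\nu_\lambda-\nu_{\underline\lambda}|\geq\beta_0$ giving a margin $\varepsilon(1+\sum f_i)$ from the subsolution; Case~II: $|\nu_\lambda-\nu_{\underline\lambda}|<\beta_0$ giving $f_i\gtrsim\sum_j f_j$, so the $F^{ij}\nabla_i\sigma\nabla_j\sigma$ term dominates). Saying ``with no positive lower bound on $\mathcal F$ and no appeal to the gap'' is the right intuition, but you should make explicit which of the two mechanisms supplies the needed positivity of $\mathcal L\widetilde\Psi$, since neither alone suffices.
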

 
 	

 The paper is organized as follows. In Section \ref{preli} we summarize some lemmas. They are key ingredients in proof of  Theorem \ref{thm-1}. In Sections \ref{sec3} and \ref{sec4} we derive 
the quantitative boundary estimate. Finally, we complete the proof of Theorem \ref{thm-existence} in Section \ref{sec5}.

  \subsection*{Acknowledgements} 
 The author was supported by the National Natural Science of Foundation of China, Grant No. 11801587.

 \section{Preliminaries}
 \label{preli}

 
 The lemma below plays an important role in the proof of Proposition \ref{mix-prop1}.
 \begin{lemma}
 	[{\cite[Lemma 2.2]{GSS14}}]
 	\label{guan2014}
 	Suppose 
 	\eqref{elliptic} and \eqref{concave} hold.
 	Let $K$ be a compact subset of $\Gamma$ and $\beta_0>0$. There is a constant $\varepsilon>0$ such that,
 	for  $\mu\in K$ and $\lambda\in \Gamma$, when $|\nu_{\mu}-\nu_{\lambda}|\geq \beta_0$,
 	\begin{equation}
 		\label{2nd}
 		\begin{aligned}
 			\sum_{i=1}^n f_{i}(\lambda)(\mu_{i}-\lambda_{i})\geq f(\mu)-f(\lambda)+\varepsilon   (1+\sum_{i=1}^n f_{i}(\lambda)).
 		\end{aligned}
 	\end{equation}
 	Here $\nu_{\lambda}=Df(\lambda)/|Df(\lambda)|$ denotes the unit normal vector to the level set, $\{\lambda'\in\Gamma: f(\lambda')=f(\lambda)\}$, passing through $\lambda$,
 	where $Df(\lambda)=(f_1(\lambda),\cdots, f_n(\lambda))$.
 	
 \end{lemma}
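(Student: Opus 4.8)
The plan is to argue by contradiction, building on the observation that concavity alone already gives the asserted inequality \emph{without} the $\varepsilon$-term: for all $\mu,\lambda\in\Gamma$ the segment $[\lambda,\mu]$ lies in the convex cone $\Gamma$, so $f(\mu)\le f(\lambda)+\sum_i f_i(\lambda)(\mu_i-\lambda_i)$, i.e.\ the ``defect''
\[
\mathcal D(\mu,\lambda):=f(\lambda)+\sum_{i=1}^n f_i(\lambda)(\mu_i-\lambda_i)-f(\mu)=f(\lambda)+\langle Df(\lambda),\mu-\lambda\rangle-f(\mu)
\]
is $\ge 0$. Thus the entire content of the lemma is a \emph{quantitative} bound $\mathcal D(\mu,\lambda)\ge\varepsilon(1+\sum_i f_i(\lambda))$ once the normals are $\beta_0$-separated. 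Suppose this fails: for every $k$ there are $\mu^{(k)}\in K$ and $\lambda^{(k)}\in\Gamma$ with $|\nu_{\mu^{(k)}}-\nu_{\lambda^{(k)}}|\ge\beta_0$ but $\mathcal D(\mu^{(k)},\lambda^{(k)})<\frac1k(1+\sigma_k)$, where $\sigma_k:=\sum_i f_i(\lambda^{(k)})$. Fix $\rho>0$ small enough that $\overline{B_\rho(K)}\subset\Gamma$ (possible since $K$ is compact in the open cone $\Gamma$), set $C_K:=\sup_{\overline{B_\rho(K)}}|Df|<\infty$, and pass to a subsequence along which either $\sigma_k\to\infty$ or $\sigma_k$ stays bounded; I treat these regimes separately.

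When $\sigma_k\to\infty$ I would test concavity of $f$ at $\lambda^{(k)}$ against the point $\xi^{(k)}:=\mu^{(k)}-\rho\,\nu_{\lambda^{(k)}}\in\overline{B_\rho(K)}\subset\Gamma$. Since $\langle Df(\lambda^{(k)}),\nu_{\lambda^{(k)}}\rangle=|Df(\lambda^{(k)})|$, the inequality $\mathcal D(\xi^{(k)},\lambda^{(k)})\ge0$ rearranges (using the mean-value bound $|f(\xi^{(k)})-f(\mu^{(k)})|\le C_K\rho$) to
\[
\mathcal D(\mu^{(k)},\lambda^{(k)})\;\ge\;\rho\,|Df(\lambda^{(k)})|+f(\xi^{(k)})-f(\mu^{(k)})\;\ge\;\rho\,|Df(\lambda^{(k)})|-C_K\rho ,
\]
and from $|Df(\lambda^{(k)})|\ge\sigma_k/\sqrt n$ we get $\mathcal D(\mu^{(k)},\lambda^{(k)})/(1+\sigma_k)\to\rho/\sqrt n>0$, contradicting $\mathcal D(\mu^{(k)},\lambda^{(k)})<\frac1k(1+\sigma_k)$. (Notice this regime does not even use the normal separation: a large gradient forces a gap by itself.)

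When $\sigma_k$ is bounded we have $|Df(\lambda^{(k)})|\le\sigma_k$ bounded, $f(\mu^{(k)})$ bounded, and $\mathcal D(\mu^{(k)},\lambda^{(k)})\to0$. Pass to a further subsequence with $\mu^{(k)}\to\mu^*\in K$ and $Df(\lambda^{(k)})\to p^*$. The key point is that the Legendre-type quantity $\Phi_k:=f(\lambda^{(k)})-\langle Df(\lambda^{(k)}),\lambda^{(k)}\rangle$ equals $\mathcal D(\mu^{(k)},\lambda^{(k)})-\langle Df(\lambda^{(k)}),\mu^{(k)}\rangle+f(\mu^{(k)})$, which refers to $\lambda^{(k)}$ only through quantities that converge; hence $\Phi_k\to f(\mu^*)-\langle p^*,\mu^*\rangle$. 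Rewriting concavity of $f$ at $\lambda^{(k)}$ as $f(y)\le\Phi_k+\langle Df(\lambda^{(k)}),y\rangle$ for all $y\in\Gamma$ and letting $k\to\infty$ yields $f(y)\le f(\mu^*)+\langle p^*,y-\mu^*\rangle$ for every $y\in\Gamma$; that is, this affine function is a supporting functional of $f$ touching the graph at the interior point $\mu^*$. Since $f$ is differentiable at $\mu^*$, this forces $p^*=Df(\mu^*)$, and $p^*\ne0$ because $f_i(\mu^*)>0$. Consequently $\nu_{\lambda^{(k)}}=Df(\lambda^{(k)})/|Df(\lambda^{(k)})|\to p^*/|p^*|=\nu_{\mu^*}$, while also $\nu_{\mu^{(k)}}\to\nu_{\mu^*}$, contradicting $|\nu_{\mu^{(k)}}-\nu_{\lambda^{(k)}}|\ge\beta_0>0$. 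Both regimes being impossible, no violating sequence exists, and the lemma holds with $\varepsilon=1/k_0$ for a suitable $k_0$.

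I expect the delicate step to be the bounded regime, precisely because $\lambda^{(k)}$ itself need \emph{not} remain in a compact subset of $\Gamma$ — it may run off to infinity or approach $\partial\Gamma$, where neither $f$ nor its continuity properties are under control. The maneuver above sidesteps this: boundedness of $\sigma_k$ keeps $Df(\lambda^{(k)})$ (not $\lambda^{(k)}$) in a compact set, and the limiting inequality becomes a statement about supporting hyperplanes of $f$ at the fixed interior point $\mu^*$, where differentiability and strict monotonicity $f_i>0$ finish the argument. The remaining verifications are routine: the elementary bounds $\frac1{\sqrt n}\sum_i a_i\le|a|\le\sum_i a_i$ for $a\in\overline{\Gamma_n}$ used to pass between $|Df|$ and $\sum_i f_i$, and the rearrangement $\mathcal D(\xi^{(k)},\lambda^{(k)})\ge0\Rightarrow\mathcal D(\mu^{(k)},\lambda^{(k)})\ge\rho|Df(\lambda^{(k)})|+f(\xi^{(k)})-f(\mu^{(k)})$.
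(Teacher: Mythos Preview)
The paper does not prove this lemma; it is quoted verbatim from \cite[Lemma 2.2]{GSS14} and used as a black box in Section~\ref{sec3}. There is thus no in-paper proof to compare against.

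Your argument is correct. The contradiction splits cleanly into the two regimes you identify. In the unbounded-gradient regime the perturbation $\xi^{(k)}=\mu^{(k)}-\rho\,\nu_{\lambda^{(k)}}$ stays in $\overline{B_\rho(K)}\subset\Gamma$ (indeed the whole segment $[\mu^{(k)},\xi^{(k)}]$ lies in $\overline{B_\rho(\mu^{(k)})}$, so the mean-value bound $|f(\xi^{(k)})-f(\mu^{(k)})|\le C_K\rho$ is justified), and the displayed rearrangement of $\mathcal D(\xi^{(k)},\lambda^{(k)})\ge0$ is a direct calculation. In the bounded-gradient regime the key step is the identification of $p^*$: the limit of the affine majorants $y\mapsto\Phi_k+\langle Df(\lambda^{(k)}),y\rangle$ is a global affine majorant of $f$ touching at $\mu^*$, and since $f\in C^\infty(\Gamma)$ the superdifferential at the interior point $\mu^*$ is the singleton $\{Df(\mu^*)\}$, forcing $p^*=Df(\mu^*)\ne0$ by \eqref{elliptic}. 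This also rules out the sub-case $\sigma_k\to0$ (which would give $p^*=0$). The passage from $Df(\lambda^{(k)})\to p^*\ne0$ to $\nu_{\lambda^{(k)}}\to\nu_{\mu^*}$ and $\nu_{\mu^{(k)}}\to\nu_{\mu^*}$ is then immediate, and the contradiction with $|\nu_{\mu^{(k)}}-\nu_{\lambda^{(k)}}|\ge\beta_0$ closes the proof.

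One cosmetic point: you write ``$\varepsilon=1/k_0$ for a suitable $k_0$,'' but strictly speaking your contradiction shows no violating sequence exists, hence some $\varepsilon>0$ works; the value $1/k_0$ is not actually produced. This is harmless.
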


 The following lemmas are key ingredients  for quantitative boundary estimate for  pure normal derivatives.
 
 
 \begin{lemma}
 [\cite{yuan2021-2}]\renewcommand{\thefootnote}{\fnsymbol{footnote}}\footnote{The lemma was proposed in
 	[arXiv:2203.03439] that was  reorganized as a part of \cite{yuan2021-2}.}
 	\label{yuan's-quantitative-lemma}
 	Let $A$ be an $n\times n$ Hermitian matrix
 	\begin{equation}\label{matrix3}\left(\begin{matrix}
 			d_1&&  &&a_{1}\\ &d_2&& &a_2\\&&\ddots&&\vdots \\ && &  d_{n-1}& a_{n-1}\\
 			\bar a_1&\bar a_2&\cdots& \bar a_{n-1}& \mathrm{{\bf a}} 
 		\end{matrix}\right)\end{equation}
 	with $d_1,\cdots, d_{n-1}, a_1,\cdots, a_{n-1}$ fixed, and with $\mathrm{{\bf a}}$ variable.
 	Denote the eigenvalues of $A$ by $\lambda=(\lambda_1,\cdots, \lambda_n)$.
 	Let $\epsilon>0$ be a fixed constant.
 	Suppose that  the parameter $\mathrm{{\bf a}}$ in $A$ satisfies  the quadratic
 	growth condition  
 	\begin{equation}
 		\begin{aligned}
 			\label{guanjian1-yuan}
 			\mathrm{{\bf a}}\geq \frac{2n-3}{\epsilon}\sum_{i=1}^{n-1}|a_i|^2 +(n-1)\sum_{i=1}^{n-1} |d_i|+ \frac{(n-2)\epsilon}{2n-3}.
 		\end{aligned}
 	\end{equation}
 	Then the eigenvalues (possibly with a proper permutation) behave like
 	\begin{equation}
 		\begin{aligned}
 			|d_{\alpha}-\lambda_{\alpha}|
 			<   \epsilon, \mbox{ } \forall 1\leq \alpha\leq n-1;\quad
 			0\leq \lambda_{n}-\mathrm{{\bf a}}
 			< (n-1)\epsilon. \nonumber
 		\end{aligned}
 	\end{equation}
 \end{lemma}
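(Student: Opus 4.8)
## Proof Proposal for Lemma \ref{yuan's-quantitative-lemma}

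The plan is to exploit that $A$ is an \emph{arrowhead} matrix, so that eigenvalue information can be read off from a one-variable secular equation. I would first reduce to the nondegenerate situation in which $d_1,\dots,d_{n-1}$ are pairwise distinct and all $a_i\neq 0$: if some $a_i=0$ the $i$-th coordinate decouples and $d_i$ is an exact eigenvalue, so one peels it off and the growth hypothesis passes to the smaller arrowhead block; repeated $d_i$'s are similarly exact eigenvalues of $A$ and are handled by a standard limiting/perturbation argument. Writing $A=\bigl(\begin{smallmatrix} D & b\\ b^{*} & \mathbf{a}\end{smallmatrix}\bigr)$ with $D=\mathrm{diag}(d_1,\dots,d_{n-1})$ and $b=(a_1,\dots,a_{n-1})^{\mathsf T}$, the Schur complement identity shows that every eigenvalue $\lambda\notin\{d_i\}$ is a root of
\[
h(\lambda)=\lambda-\mathbf{a}-\sum_{i=1}^{n-1}\frac{|a_i|^{2}}{\lambda-d_i}.
\]
Relabelling so that $d_1<\dots<d_{n-1}$, inspection of the poles of $h$ shows that $h$ is strictly increasing on each interval $(d_{\alpha-1},d_\alpha)$ (with $d_0:=-\infty$) and on $(d_{n-1},+\infty)$, and has exactly one root $\nu_\alpha$ in each; these $\nu_1<\dots<\nu_n$ are all the eigenvalues of $A$.

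Next I would pin down the large root $\nu_n$. From \eqref{guanjian1-yuan} one has $\mathbf{a}>\max_i d_i$, hence $h(\mathbf{a})=-\sum_i|a_i|^{2}/(\mathbf{a}-d_i)\le 0$, and since $h$ is increasing on $(d_{n-1},\infty)$ this gives $\nu_n\ge\mathbf{a}$. For the upper bound, the three terms on the right of \eqref{guanjian1-yuan} are arranged precisely so that $\mathbf{a}+(n-1)\epsilon-d_i\ge\mathbf{a}-|d_i|\ge\frac{2n-3}{\epsilon}|a_i|^{2}$ for each $i$, whence $|a_i|^{2}/(\mathbf{a}+(n-1)\epsilon-d_i)\le\epsilon/(2n-3)$ and therefore $h(\mathbf{a}+(n-1)\epsilon)\ge(n-1)\epsilon-(n-1)\epsilon/(2n-3)>0$ when $n\ge 3$; thus $0\le\nu_n-\mathbf{a}<(n-1)\epsilon$. (For $n=2$ this step, and the next, are immediate from the explicit quadratic formula for the $2\times2$ matrix, so I would dispose of that case at the outset.)

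Then I would show $|\nu_\alpha-d_\alpha|<\epsilon$ for every $\alpha\le n-1$; matching $\nu_\alpha\leftrightarrow d_\alpha$ and $\nu_n\leftrightarrow\mathbf{a}$ is the permutation in the statement. Since $\nu_\alpha<d_\alpha$ it remains to prove $\nu_\alpha>d_\alpha-\epsilon$. If $d_\alpha-d_{\alpha-1}\le\epsilon$ this is clear because $\nu_\alpha>d_{\alpha-1}$. Otherwise $d_\alpha-\epsilon$ lies in the interval where $h$ is increasing, so it suffices to show $h(d_\alpha-\epsilon)<0$. The point is that in $h(d_\alpha-\epsilon)=(d_\alpha-\epsilon-\mathbf{a})-\sum_i|a_i|^{2}/(d_\alpha-\epsilon-d_i)$ the summands coming from indices $i$ with $d_i<d_\alpha$ enter with a favourable sign, while the remaining summands are bounded by $\tfrac1\epsilon\sum_i|a_i|^{2}\le\mathbf{a}/(2n-3)$; together with $d_\alpha\le\sum_i|d_i|\le\mathbf{a}/(n-1)$ this yields $h(d_\alpha-\epsilon)\le -\epsilon+\mathbf{a}\bigl(\tfrac1{n-1}+\tfrac1{2n-3}-1\bigr)<0$, the last inequality holding because $\tfrac1{n-1}+\tfrac1{2n-3}<1$ for $n\ge 3$.

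I expect the third step to be the main obstacle: one must extract from the single scalar hypothesis \eqref{guanjian1-yuan} enough room to push every small eigenvalue to within $\epsilon$ of the corresponding $d_\alpha$ while simultaneously trapping $\nu_n$ in $[\mathbf{a},\mathbf{a}+(n-1)\epsilon)$, and the constants $\frac{2n-3}{\epsilon}$, $n-1$, $\frac{(n-2)\epsilon}{2n-3}$ appearing in \eqref{guanjian1-yuan} are exactly what make all of these estimates close (in particular the split of the secular function near each $d_\alpha$ into favourable and controllable terms). The remaining difficulty is purely organisational: carefully handling repeated $d_i$'s, vanishing $a_i$'s, and the borderline value $\mathbf{a}=\max_i d_i$, which is why I would isolate the nondegenerate case first.
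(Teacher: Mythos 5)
Your proof is correct, and the secular-equation (arrowhead-matrix) approach is a clean and essentially self-contained way to obtain exactly the stated constants. The paper itself does not reproduce a proof of this lemma—it cites \cite{yuan2021-2} (where the lemma is established via the characteristic polynomial and interlacing, together with a $2\times2$ reduction), so the two arguments are in the same spirit and yield the same conclusion.

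A few points worth tidying up so the write-up stands on its own. First, when you peel off a coordinate with $a_i=0$, both the matrix size and the constants in \eqref{guanjian1-yuan} change; the reduction is legitimate because each of $\frac{2n-3}{\epsilon}$, $n-1$, and $\frac{(n-2)\epsilon}{2n-3}$ is nondecreasing in $n$, but this monotonicity should be said explicitly. Second, for coinciding $d_i=d_j$ the cleanest device is a unitary rotation in the $(i,j)$-plane sending $(a_i,a_j)\mapsto(0,\sqrt{|a_i|^2+|a_j|^2})$, which preserves the spectrum and the quantity $\sum|a_k|^2$ and reduces to a vanishing off-diagonal entry; a perturbation argument also works but requires one extra continuity remark. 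Third, your bound $h(d_\alpha-\epsilon)\le \mathbf a\bigl(\tfrac1{n-1}+\tfrac1{2n-3}-1\bigr)-\epsilon$ relies on $\mathbf a\ge 0$, which does follow from \eqref{guanjian1-yuan} but deserves one line, and you should note that the argument covers $\alpha=1$ as well (with $d_0=-\infty$ the "otherwise" branch always applies and there are no favourable terms, but the same bound goes through). Finally, the estimate $h(\mathbf a+(n-1)\epsilon)>0$ uses $\mathbf a>\max_i d_i$, which for $n\ge3$ is a strict inequality thanks to the positive term $\frac{(n-2)\epsilon}{2n-3}$ in \eqref{guanjian1-yuan}; the $n=2$ case, as you note, is disposed of directly by the quadratic formula. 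None of these is a gap in the underlying idea—they are bookkeeping items needed to make the reduction to the nondegenerate arrowhead case airtight.
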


 The concavity of $f$ 
 yields that
 \begin{equation}\label{010} \begin{aligned}
 		\mbox{For any } \lambda, \mbox{  } \mu\in\Gamma,  \mbox{  } \sum_{i=1}^n f_i(\lambda)\mu_i\geq  \limsup_{t\rightarrow+\infty} f(t\mu)/t. \nonumber
 \end{aligned}\end{equation}
 Inspired by this observation,
 the author \cite{yuan2021-2}  introduced the following conditions:
 \begin{equation}
 	\label{addistruc-0}
 	\begin{aligned}
 		\mbox{For any } \lambda\in\Gamma, \mbox{  } \lim_{t\rightarrow+\infty} f(t\lambda)>-\infty,
 	\end{aligned}
 \end{equation}
 \begin{equation}
 	\label{addistruc-1}
 	\begin{aligned}
 		\mbox{For any } \lambda\in\Gamma, \mbox{  } 
 		\limsup_{t\rightarrow+\infty} f(t\lambda)/t\geq0.
 	\end{aligned}
 \end{equation}
 Obviously, it leads to
 \begin{lemma}[\cite{yuan2021-2}]
 	\label{lemma-new-1}
 	Suppose $f$ satisfies \eqref{concave} and \eqref{addistruc-1}.
 	Then  
 	\begin{equation}
 		\label{addistruc-2}
 		\begin{aligned}
 			\sum_{i=1}^n f_i(\lambda)\mu_i\geq 0  \mbox{ for any } \lambda, \mbox{  } \mu\in\Gamma.
 		\end{aligned}
 	\end{equation}
 	In particular
 	\begin{equation}
 		\label{addistruc-3}
 		\begin{aligned}
 			\sum_{i=1}^n f_i(\lambda)\lambda_i\geq0, \quad \forall \lambda\in\Gamma.
 		\end{aligned}
 	\end{equation}
 	If, in addition, $\sum_{i=1}^n f_i(\lambda)>0$
 	then
 	\begin{equation}
 		\label{addistruc-4}
 		\begin{aligned}
 			\sum_{i=1}^n f_i(\lambda)\mu_i>0 \mbox{ for any } \lambda, \mbox{  } \mu\in \Gamma.
 		\end{aligned}
 	\end{equation}
 \end{lemma}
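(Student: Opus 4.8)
The plan is to obtain all three assertions as immediate consequences of the concavity inequality displayed just before the lemma, namely that for all $\lambda,\mu\in\Gamma$ one has $\sum_{i=1}^n f_i(\lambda)\mu_i\ge \limsup_{t\to+\infty} f(t\mu)/t$. First I would recall why this holds, since it is the only real ingredient: concavity of $f$ on $\Gamma$ gives the supporting-hyperplane inequality $f(t\mu)\le f(\lambda)+\sum_{i=1}^n f_i(\lambda)(t\mu_i-\lambda_i)$ for every $t>0$, which is legitimate because $t\mu\in\Gamma$ ($\Gamma$ being a cone with vertex at the origin). Dividing by $t$ and letting $t\to+\infty$ makes the terms $f(\lambda)/t$ and $\tfrac1t\sum_i f_i(\lambda)\lambda_i$ vanish, leaving precisely $\sum_i f_i(\lambda)\mu_i\ge \limsup_{t\to+\infty} f(t\mu)/t$.

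For \eqref{addistruc-2} I would then simply combine this with hypothesis \eqref{addistruc-1} applied at the point $\mu$: the right-hand side above is $\ge 0$, hence $\sum_i f_i(\lambda)\mu_i\ge 0$ for all $\lambda,\mu\in\Gamma$. Inequality \eqref{addistruc-3} is the special case $\mu=\lambda$.

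For the strict inequality \eqref{addistruc-4}, assume in addition $\sum_i f_i(\lambda)>0$, so that the linear functional $L(\mu):=\sum_{i=1}^n f_i(\lambda)\mu_i=\langle Df(\lambda),\mu\rangle$ is not identically zero. We have just shown $L\ge 0$ on $\Gamma$. If $L(\mu_0)=0$ for some $\mu_0\in\Gamma$, then, since $\Gamma$ is open, pick $r>0$ with the ball of radius $r$ about $\mu_0$ contained in $\Gamma$; applying \eqref{addistruc-2} to the points $\mu_0\pm\tfrac r2 v$ for an arbitrary unit vector $v$ yields $\pm\tfrac r2 L(v)\ge 0$, hence $L(v)=0$ for every $v$, contradicting $L\not\equiv 0$. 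Thus $L>0$ throughout $\Gamma$, which is \eqref{addistruc-4}.

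I do not anticipate any genuine obstacle: as the text indicates, the lemma is essentially immediate from the concavity observation. The only point that needs a little care is the final step, where the openness of the cone $\Gamma$ is exactly what upgrades the non-strict inequality \eqref{addistruc-2} to the strict inequality \eqref{addistruc-4} once $\sum_i f_i(\lambda)>0$ (equivalently, once $Df(\lambda)\neq 0$) is assumed.
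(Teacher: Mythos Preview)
Your proposal is correct and follows exactly the route the paper indicates: the concavity inequality $\sum_i f_i(\lambda)\mu_i\ge \limsup_{t\to+\infty} f(t\mu)/t$ displayed just before the lemma, combined with hypothesis \eqref{addistruc-1}, gives \eqref{addistruc-2} and hence \eqref{addistruc-3}, and the paper merely writes ``Obviously, it leads to'' without further detail. Your argument for the strict inequality \eqref{addistruc-4} via the openness of $\Gamma$ is a clean way to make explicit what the paper leaves implicit.
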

 
 

 We now give characterizations of concave functions  satisfying \eqref{addistruc}.
 \begin{lemma}
 	[\cite{yuan2021-2}]
 	\label{lemma3.4}
 	In the presence of \eqref{elliptic} and \eqref{concave}, the following statements are equivalent to each other.
 	\begin{enumerate}
 		\item[{\bf(1)}] $f$ satisfies  \eqref{addistruc}.
 		\item[{\bf(2)}] $f$ satisfies  \eqref{addistruc-0}.
 		\item[{\bf(3)}] $f$ satisfies  \eqref{addistruc-1}.
 		\item[{\bf(4)}] $f$ satisfies \eqref{addistruc-2}.
 		\item[{\bf(5)}] $f$ satisfies  \eqref{addistruc-3}.
 		\item[{\bf(6)}] $f$ satisfies \eqref{addistruc-4}.
 	\end{enumerate}
 \end{lemma}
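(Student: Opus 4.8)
The plan is to establish the cycle $(2)\Rightarrow(3)\Rightarrow(4)\Rightarrow(5)\Rightarrow(2)$ together with $(6)\Rightarrow(4)$, $(1)\Rightarrow(2)$ and, as the only nontrivial step, $(2)\Rightarrow(1)$. The workhorse is the one-variable reduction $g_{\mu}(t):=f(t\mu)$ $(t>0)$: by \eqref{concave} it is concave, with $g_{\mu}'(t)=\sum_i f_i(t\mu)\mu_i$, so $t\,g_{\mu}'(t)=\sum_i f_i(t\mu)(t\mu_i)$ and \eqref{addistruc-3} at the point $t\mu$ is exactly the assertion $g_{\mu}'(t)\ge 0$. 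Now $(6)\Rightarrow(4)$ and $(4)\Rightarrow(5)$ (specialize $\mu=\lambda$) are immediate; $(5)\Rightarrow(2)$ because \eqref{addistruc-3} at $t\mu$ gives $g_{\mu}'\ge 0$, so $g_{\mu}$ is nondecreasing and $\lim_{t\to+\infty}f(t\mu)\ge f(\mu)>-\infty$; $(2)\Rightarrow(3)$ because a concave function on $(0,+\infty)$ whose limit at $+\infty$ is $>-\infty$ cannot have negative slope anywhere, whence $g_{\mu}'\ge 0$ and $\limsup_{t}f(t\mu)/t=\lim_t g_{\mu}'(t)\ge 0$; and $(3)\Rightarrow(4),(5),(6)$ is precisely Lemma~\ref{lemma-new-1} (the implication into \eqref{addistruc-4} using $\sum_i f_i(\lambda)>0$, which holds by \eqref{elliptic}). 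With the trivial $(1)\Rightarrow(2)$ this gives $(2)\Leftrightarrow(3)\Leftrightarrow(4)\Leftrightarrow(5)\Leftrightarrow(6)$ and $(1)\Rightarrow$ all, so it remains to prove $(2)\Rightarrow(1)$.

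For $(2)\Rightarrow(1)$: since $(2)\Leftrightarrow(5)$, every $g_{\lambda}$ is nondecreasing, so $\ell(\lambda):=\lim_{t\to+\infty}f(t\lambda)=\sup_{t>0}f(t\lambda)$ exists in $(-\infty,+\infty]$. By \eqref{elliptic} $f$ has no interior critical point, so $\sup_{\Gamma}f$ is not attained and $f(\mu)<\sup_{\Gamma}f$ for all $\mu\in\Gamma$; hence \eqref{addistruc} is equivalent to the single statement that $\ell(\lambda)=\sup_{\Gamma}f$ for every $\lambda\in\Gamma$. I would prove this by splitting on the recession function $f^{\infty}(\nu):=\lim_{t\to+\infty}f(t\nu)/t=\inf_{t>0}g_{\nu}'(t)$, which is $\ge 0$ by $(3)$.

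\emph{Case 1: $f^{\infty}(\nu_0)>0$ for some $\nu_0\in\Gamma$.} Then $f(t\nu_0)\to+\infty$. For arbitrary $\nu\in\Gamma$ pick $\theta\in(0,1)$ small with $\lambda:=(\nu-\theta\nu_0)/(1-\theta)\in\Gamma$ ($\Gamma$ being open); then $t\nu=(1-\theta)(t\lambda)+\theta(t\nu_0)$, and concavity plus monotonicity of $g_{\lambda}$ give $f(t\nu)\ge(1-\theta)f(t\lambda)+\theta f(t\nu_0)\ge(1-\theta)f(\lambda)+\theta f(t\nu_0)\to+\infty$. Thus $\ell\equiv+\infty=\sup_{\Gamma}f$. \emph{Case 2: $f^{\infty}\equiv 0$ on $\Gamma$.} First $\sup_{\Gamma}f=\ell(\mathbf 1)$: every $\omega\in\Gamma$ has $\max_i\omega_i>0$ (a proper symmetric cone containing $\Gamma_n$ contains no vector with all entries $\le 0$), so $\omega\le(\max_i\omega_i)\mathbf 1\in\Gamma_n$ and $f(\omega)\le f((\max_i\omega_i)\mathbf 1)\le\ell(\mathbf 1)$, while $\ell(\mathbf 1)\le\sup_{\Gamma}f$ trivially. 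Next, the tangent-plane inequality for the concave $f$ at $t\lambda$, evaluated at $s\mathbf 1$, reads $f(s\mathbf 1)\le f(t\lambda)+s\sum_i f_i(t\lambda)-t\,g_{\lambda}'(t)$, and since $t\,g_{\lambda}'(t)=\sum_i f_i(t\lambda)(t\lambda_i)\ge 0$ by \eqref{addistruc-3} this yields $f(t\lambda)\ge f(s\mathbf 1)-s\sum_i f_i(t\lambda)$. Hence, provided $\sum_i f_i(t\lambda)\to 0$ as $t\to+\infty$, letting $t\to+\infty$ and then $s\to+\infty$ gives $\ell(\lambda)\ge\ell(\mathbf 1)=\sup_{\Gamma}f$, which finishes the proof.

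The main obstacle is exactly the decay $\sum_i f_i(t\lambda)\to 0$ along each ray when $f^{\infty}\equiv 0$ — that is, controlling the whole gradient $Df(t\lambda)$ and not just its pairing $g_{\lambda}'(t)=\langle Df(t\lambda),\lambda\rangle$ with $\lambda$. For $\lambda\in\Gamma_n$ it is easy: scaling so that $c\lambda\ge\mathbf 1$ componentwise gives $0\le\sum_i f_i(s\lambda)\le\langle Df(s\lambda),c\lambda\rangle=c\,g_{\lambda}'(s)\to 0$. For rays meeting $\Gamma\setminus\overline{\Gamma}_n$, where $t\lambda$ has a very negative entry and monotonicity is useless, one must use \eqref{concave}, the positivity \eqref{elliptic}, and hypothesis $(5)$ together — e.g. by trapping $f$ between $f(t\lambda)$ and $\sup_{\Gamma}f$ on balls $B(t\lambda,R)\subset\Gamma$ (whose radius may be taken $\to\infty$ as $t\to\infty$, $\Gamma$ being a cone and $\lambda$ interior) to bound $|Df(t\lambda)|$, and then invoking $f^{\infty}\equiv 0$ to absorb the remaining slack. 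Once this decay is in hand the equivalence of $(1)$ with $(2)$–$(6)$ is complete.
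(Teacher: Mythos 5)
The overall architecture is right: reducing to the one-variable functions $g_\mu(t)=f(t\mu)$, closing the cycle $(2)\Rightarrow(3)\Rightarrow(4),(5),(6)$ via Lemma \ref{lemma-new-1}, $(5)\Rightarrow(2)\Rightarrow(3)$ by concavity of $g_\mu$, $(1)\Rightarrow(2)$ trivially, and then reducing $(2)\Rightarrow(1)$ (using that $\sup_\Gamma f$ is not attained, by \eqref{elliptic}) to the identity $\ell(\lambda)=\sup_\Gamma f$. Case~1 and the preliminary reductions in Case~2 (in particular $\sup_\Gamma f=\ell(\mathbf 1)$) are all correct.

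However, you explicitly leave the crux of Case~2 unproven: the decay $\sum_i f_i(t\lambda)\to 0$ as $t\to\infty$ when $f^\infty\equiv0$, for $\lambda\in\Gamma\setminus\overline\Gamma_n$. The ball-trapping strategy as written does not go through: to bound $|Df(t\lambda)|$ by concavity on a ball $B(t\lambda,R)\subset\Gamma$ you need a \emph{lower} bound for $f$ on that ball, and neither $f(t\lambda)$ nor $\sup_\Gamma f$ supplies one — $f$ may tend to $-\infty$ near $\partial\Gamma$, and the balls push out toward $\partial\Gamma$ as $t$ grows. (It can be rescued by scaling the ball back to a fixed $B(\lambda,\delta)$ via ray-monotonicity and then using $f(t\lambda)/t\to0$, but that is several further steps.) The clean fix is simply the generalization of your own $\Gamma_n$ argument, replacing componentwise positivity of $Df$ by \eqref{addistruc-2}, which you already have at this stage: since $\Gamma$ is open pick $\epsilon>0$ with $\lambda-\epsilon\mathbf 1\in\Gamma$; then \eqref{addistruc-2} applied to $t\lambda\in\Gamma$ and $\lambda-\epsilon\mathbf 1\in\Gamma$ gives
\begin{equation}
0\le \epsilon\sum_{i=1}^n f_i(t\lambda)=g_\lambda'(t)-Df(t\lambda)\cdot(\lambda-\epsilon\mathbf 1)\le g_\lambda'(t)\longrightarrow f^\infty(\lambda)=0. \nonumber
\end{equation}
With this inserted, your proof of $(2)\Rightarrow(1)$ — and hence of the whole lemma — is complete. (The paper itself cites \cite{yuan2021-2} for this lemma and gives no proof, so there is no internal argument to compare against.)
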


 
 

 
 \section{Quantitative boundary estimate for mixed derivatives}
 \label{sec3}

 For a point $x_0\in \partial M$, 
 we shall choose local coordinates
 \begin{equation}
 	\label{coordinate1}
 	\begin{aligned}
 		x=(x_1,\cdots,x_n)
 	\end{aligned}
 \end{equation}
 with origin at $x_0$ 
 such that, when restricted to $\partial M$, $\frac{\partial}{\partial x_n}$ is normal to $\partial M$; 
 moreover, we assume 
 $g_{ij}(x_0)=\delta_{ij}$.

 We will carry out the computations in such local coordinates, and set
 \[\nabla_i=\nabla_{\frac{\partial}{\partial x_i}},
 \quad 
 \nabla_{ij}= 
 \frac{\partial^2}{\partial x_i\partial x_j} -\Gamma_{ij}^k \frac{\partial}{\partial x_k},\]  with a similar convention for higher derivatives, 
 where $\Gamma^k_{ij}$ are the Christoffel symbols   $$\nabla_{\frac{\partial}{\partial x_i}}\frac{\partial}{\partial x_j}=\Gamma_{ij}^k \frac{\partial}{\partial x_k}.$$
 \begin{proposition}
 	\label{mix-prop1}
 	Suppose, in addition to   \eqref{elliptic}, \eqref{concave}  and \eqref{addistruc},  
 	that the data $\varphi\in C^3(\partial M)$,  $\psi\in C^1(\bar M)$ satisfies \eqref{nondegenerate}, and that there is an admissible subsolution $\underline{u}\in C^2(\bar M)$. Then for any $x_0\in\partial  M$, under local coordinate system \eqref{coordinate1}, the admissible solution $u$ of Dirichlet problem \eqref{mainequ} must satisfy 
 	\begin{equation}
 		\begin{aligned}
 			|\nabla_{\alpha n}u(x_0)|\leq C(1+\sup_M|\nabla u|),  \quad \forall 1\leq\alpha\leq n-1
 		\end{aligned}
 	\end{equation}
 	where $C$ is a uniform positive constant depending not on $(\delta_{\psi,f})^{-1}$.
 \end{proposition}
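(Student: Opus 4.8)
The plan is to follow the classical Caffarelli--Nirenberg--Spruck barrier construction for mixed (tangential--normal) second derivatives, but with all estimates tracked carefully so that the final constant never involves $(\delta_{\psi,f})^{-1}$. Fix $x_0\in\partial M$, choose the local coordinates \eqref{coordinate1}, and write $\rho(x)=\dist(x,\partial M)$, which is smooth near $x_0$. For a fixed tangential direction $\alpha\le n-1$, the standard device is to apply the linearized operator $L=\sum_{i,j}F^{ij}\nabla_{ij}$ to the tangential difference quotient of $u-\underline u$ along $\partial M$. More precisely, since $u-\underline u=0$ on $\partial M$, one has an expression of the form $(u-\underline u)$ near $\partial M$ controlled by $\rho$, and differentiating the boundary condition $u=\varphi$ tangentially produces, for the operator $T_\alpha=\partial_\alpha+\sum_\beta b_{\alpha\beta}(x)(x_\beta\partial_n-\cdots)$ (a tangential vector field adjusted so $T_\alpha$ is tangent to $\partial M$), a function $w=T_\alpha(u-\underline u)$ that vanishes on $\partial M$ up to terms controlled by $\varphi\in C^3$ and $\sup_M|\nabla u|$. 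The aim is then to build a barrier
\[
\Phi = A_1 v + A_2 |x|^2 - A_3 \sum_{\beta<n}|\nabla_\beta(u-\underline u)|^2
\]
with $v=(u-\underline u)+t\rho-N\rho^2$ the usual subsolution-based barrier, so that $L\Phi\le -(\text{something coercive})$ and $\Phi\ge \pm w$ on the boundary of a half-ball $M\cap B_\delta(x_0)$; the maximum principle then gives $|w|\le C\Phi$, and evaluating the normal derivative of this inequality at $x_0$ yields $|\nabla_{\alpha n}u(x_0)|\le C(1+\sup_M|\nabla u|)$.

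The crucial point — and the place where the degeneracy enters — is the estimate $L(u-\underline u)\le -\varepsilon(1+\sum_i F^{ii})$ with $\varepsilon$ independent of $\delta_{\psi,f}$. This is where Lemma \ref{guan2014} does the work: because $\underline u$ is an admissible subsolution with $f(\lambda(\nabla^2\underline u+\chi))\ge\psi=f(\lambda(\nabla^2 u+\chi))$, the two Hessians lie on level sets $\{f=\psi\}$ whose normals $\nu_{\lambda(\nabla^2 u+\chi)}$, $\nu_{\lambda(\nabla^2\underline u+\chi)}$ we want to separate. If at a point these normals are $\beta_0$-separated for some uniform $\beta_0$, Lemma \ref{guan2014} gives exactly $\sum_i F^{ii}\nabla_{ij}(\underline u-u)\ge \varepsilon(1+\sum_i F^{ii})$ with $\varepsilon$ depending only on $\beta_0$ and the compact set containing $\lambda(\nabla^2\underline u+\chi)$ — and, decisively, not on the gap $\delta_{\psi,f}$, since we are comparing two points on the \emph{same} level set rather than using the size of $\psi-\sup_{\partial\Gamma}f$. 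In the complementary case where the normals are close, one falls back on condition \eqref{addistruc} via Lemma \ref{lemma-new-1}: \eqref{addistruc-3} gives $\sum_i F^{ii}\lambda_i\ge 0$, which lets one absorb the zeroth-order and curvature terms, and the boundary geometry \eqref{bdry-assum1} is what makes the construction of $v$ (in particular the choice of $t,N$) possible near $\partial M$ even when $\Gamma$ is merely the cone over $\overline\Gamma_\infty$ in the relevant directions.

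I would organize the proof as: (i) set up coordinates, the tangential operator $T_\alpha$, and reduce to constructing a barrier for $w=T_\alpha(u-\underline u)$; (ii) bound $|Lw|\le C(1+\sum_i F^{ii})(1+\sup_M|\nabla u|)$ by commuting $T_\alpha$ past the equation and using that $\varphi\in C^3$, $\psi\in C^1$, and concavity controls the third-order commutator terms; (iii) establish the key coercivity $L(u-\underline u)\le -\varepsilon(1+\sum_i F^{ii})$ via the dichotomy above using Lemma \ref{guan2014} and \eqref{addistruc}; (iv) choose the constants $A_1\gg A_2\gg A_3\gg 1$ and $\delta$ small so that $L\Phi\le Lw$ on $M\cap B_\delta$ and $\Phi\ge|w|$ on $\partial(M\cap B_\delta)$ — on the spherical part using that $u-\underline u$ is bounded and the $A_2|x|^2$ term dominates, on the flat part using the vanishing of $w$ there; (v) apply the maximum principle and differentiate in the normal direction at $x_0$. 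The main obstacle is step (iii): one must verify that in \emph{both} branches of the dichotomy the resulting $\varepsilon$ and all absorbed error terms are quantitatively independent of $\delta_{\psi,f}$, which is precisely the novelty over \cite{CNS3,Guan12a}; once that is secured, steps (i), (ii), (iv), (v) are the standard CNS machinery adapted to the Riemannian setting.
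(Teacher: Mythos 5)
Your outline matches the paper's in its main architecture: CNS-style barrier near $x_0$, a tangential vector field $\mathcal{T}$ adapted to $\partial M$, a dichotomy driven by Lemma \ref{guan2014}, and the use of \eqref{addistruc} (through Lemma \ref{lemma3.4}/Lemma \ref{lemma-new-1}) to control the problematic $\sum_i f_i|\lambda_i|$ term coming from commuting $\mathcal{T}$ past the linearized equation. Your barrier is a sign-flipped variant of the paper's $\widetilde{\Psi}$, with the squared tangential gradient term playing the same role of producing the coercive quantity $\sum_{i\ne r}f_i\lambda_i^2$ via \cite[Prop.~2.19]{Guan12a}. Up to this point the plans coincide.

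However, your treatment of the second branch of the dichotomy has a genuine gap. When $|\nu_\lambda-\nu_{\underline\lambda}|<\beta_0$, you only get $\mathcal{L}(\underline{u}-u)\ge 0$ from concavity — no $\varepsilon(1+\sum_i f_i)$ is available from Lemma \ref{guan2014}. Invoking \eqref{addistruc-3} ($\sum_i f_i\lambda_i\ge 0$) to ``absorb the zeroth-order and curvature terms'' does not produce the missing coercive term of size $\sum_i f_i$ needed to dominate the errors from Lemma \ref{DN} and from $\mathcal{L}\rho^2$, $\mathcal{L}\sigma$. The mechanism the paper actually uses in Case II is that closeness of the normals forces a quantitative uniform ellipticity: $\nu_\lambda-\beta_0\vec{\mathbf{1}}\in\Gamma_n$, hence $f_i\ge\frac{\beta_0}{\sqrt{n}}\sum_j f_j$ for every $i$ (equation \eqref{2nd-case1}), which makes the distance-squared term in the barrier yield $2NA_3\sqrt{b_1}F^{ij}\nabla_i\sigma\nabla_j\sigma\gtrsim NA_3\sqrt{b_1}\sum_i f_i$. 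This is the coercivity that closes Case II, with $N$ chosen large; your sketch never identifies it. Relatedly, your appeal to \eqref{bdry-assum1} to ``make the construction of $v$ possible'' is a misattribution: \eqref{bdry-assum1} enters only in Proposition \ref{proposition-quar-yuan1} for the double normal derivative, not in this mixed-derivative estimate. Finally, your claim that $\underline u$ and $u$ lie on the same level set $\{f=\psi\}$ is not what Lemma \ref{guan2014} needs (and is generally false, since $f(\underline\lambda)\ge\psi$ with possible strict inequality); what matters is that $\underline\lambda$ ranges in a fixed compact subset of $\Gamma$, which fixes $\beta_0$ and $\varepsilon$ independently of $\delta_{\psi,f}$.
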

 
 \subsection{Useful formulas and notation}
 Throughout this paper we
 use the notation  \[\mathfrak{g}=\nabla^2 u+\chi, \quad \mathfrak{\underline{g}}=\nabla^2\underline{u}+\chi.\]
 By direct computation one has
 \begin{equation}
 	\label{direct-compute1}
 	\begin{aligned}
 		\nabla_{ij}\nabla_k u    = \frac{\partial^3 u}{\partial x_i\partial x_j \partial x_k}-\Gamma_{ij}^l\frac{\partial^2 u}{\partial x_k\partial x_l}, 
 		 	\end{aligned}
 	\end{equation}
  \begin{equation}
 	\label{direct-compute2}
 	\begin{aligned}
 		\nabla_{ijk}u=\,&
 		\frac{\partial^3 u}{\partial x_i\partial x_j \partial x_k}
 		-\frac{\partial \Gamma_{ij}^l}{\partial x_k}\frac{\partial u}{\partial x_l}   
 		-\Gamma_{ij}^l\Gamma_{kl}^p \frac{\partial u}{\partial x_p}    
 		-\Gamma_{ij}^l \nabla_{kl}u-\Gamma_{ki}^l\nabla_{lj}u -\Gamma_{kj}^l\nabla_{il}u.
 	\end{aligned}
 \end{equation}

 Let $\sigma(x)$ be the distance function from $x\in M$ to $\partial M$, $\rho(x)$ be the distance from $x$ to $x_0$. 
 We denote \[\Omega_{\delta}=\{x\in M: \rho(x)<\delta\}. \] 
 The boundary value condition, $u=\underline{u}=\varphi$ on $\partial M$, implies that  
 \begin{equation}\label{ab1}
 	\begin{aligned}
 		\nabla_\alpha(u-\varphi)=
 		\frac{\nabla_\alpha\sigma}{\nabla_n\sigma}\nabla_n(u-\varphi) \mbox{ on } \partial M\cap\overline{\Omega}_\delta,
 	\end{aligned}
 \end{equation}
 \begin{equation}
 	\label{yuanr-1}
 	\begin{aligned}
 		\nabla_{\alpha\beta} u=\nabla_{\alpha\beta} \underline{u}+\frac{\nabla_{\alpha\beta}\sigma}{\nabla_n\sigma}
 		\nabla_n(u-\underline{u}) \mbox{ at } x_0.
 	\end{aligned}
 \end{equation}
 Let $h$ be the solution to 
 \begin{equation}
 	\label{supersolution1}
 	\begin{aligned}
 		\Delta h+\mathrm{tr}_g\chi=0 \mbox{ in } M, 
 		\quad h=\varphi \mbox{ on } \partial M.
 	\end{aligned}
 \end{equation}
 The maximum principle also yields 
 \begin{equation}
 	\begin{aligned}
 		\underline{u}\leq u\leq h  \mbox{ in } \partial M,
 	\end{aligned}
 \end{equation}
 then at $x_0$
 \begin{equation}
 	\begin{aligned}
 		\nabla_n\underline{u}\leq \nabla_n u\leq \nabla_n h.
 	\end{aligned}
 \end{equation}
 In particular, we have  $C^0$-bound and boundary gradient estimate 
 \begin{equation}
 	\label{c0-bdr-c1}
 	\sup_M|u|+\sup_{\partial M} |\nabla u|\leq C.
 \end{equation}
 
 Under local coordinates \eqref{coordinate1}, 
 the tangential operator on  boundary is given by
 \begin{equation}
 	\label{tangential-operator12321-meng}
 	\mathcal{T} 
 	=\pm\left({\frac{\partial}{\partial x_{\alpha}}}-\eta\frac{\partial}{\partial x_n}\right),
 	\mbox{ for } 1\leq \alpha \leq n-1,
 \end{equation}
 where $\eta={\nabla_{\alpha}\sigma}/{\nabla_{n}\sigma}$. The \eqref{ab1} means that
 \begin{equation}
 	\label{ab2}
 	\mathcal{T}(u-\varphi)=0 \mbox{ on } \partial M\cap \overline{\Omega}_\delta.
 \end{equation}

 \subsection{Completion of the proof of Proposition \ref{mix-prop1}} 
 
 Let $\mathcal{L}$ be the linearized operator of  
 equation \eqref{mainequ} at $u$. Locally, it is given by
 \begin{equation}
 	\label{linearoperator2}
 	\begin{aligned}
 		\mathcal{L}v= F^{ij}\nabla_{ij} v \quad
 		\mbox{ for } v\in C^{2}( M),  \nonumber
 	\end{aligned}
 \end{equation}
 where $F^{ij}=\frac{\partial F}{\partial a_{ij}}(\mathfrak{g})$.
 First, we have the following lemma. 
 \begin{lemma}
 	\label{DN}
 	Let $u\in C^3(M)\cap C^1(\bar M)$ be an admissible solution to equation \eqref{mainequ}. For some small $\delta>0$, we have
 	\begin{equation}
 		\label{pili1}
 		\begin{aligned}
 			\left|\mathcal{L}(\mathcal{T}(u-\varphi))\right|\leq C\left(1+(1+\sup_{ M}|\nabla u|)
 			\sum_{i=1}^n f_i+  \sum_{i=1}^n f_i|\lambda_{i}|\right), \mbox{ in } \Omega_{\delta} 
 		\end{aligned}
 	\end{equation}
 	where  $C$ depends on
 	$|\varphi|_{C^{3}(M)}$, $|\chi|_{C^{1}(M)}$, $\psi_{C^{1}(M)}$
 	and other known data (but not on $(\delta_{\psi,f})^{-1}$).
 	
 \end{lemma}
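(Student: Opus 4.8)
The plan is to expand $\mathcal{L}(\mathcal{T}(u-\varphi))$ by the product rule and bound each resulting term; the only nonroutine ingredient is the identity obtained by differentiating \eqref{mainequ} once in a coordinate direction. Since $\partial_k v=\nabla_k v$ on functions, $\mathcal{T}(u-\varphi)=\pm\big(\partial_\alpha(u-\varphi)-\eta\,\partial_n(u-\varphi)\big)$ with $\eta=\nabla_\alpha\sigma/\nabla_n\sigma$; after shrinking $\delta$ so that $\nabla_n\sigma\neq0$ on $\Omega_\delta$, $\eta$ is a fixed smooth function there with $|\eta|+|\nabla\eta|+|\nabla^2\eta|\le C$. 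Extend $\varphi$ to a $C^3$ function on $\bar M$. Using $\mathcal{L}=F^{ij}\nabla_{ij}$ together with the scalar product rule $\nabla_{ij}(\phi w)=\phi\,\nabla_{ij}w+w\,\nabla_{ij}\phi+\nabla_i\phi\,\nabla_j w+\nabla_j\phi\,\nabla_i w$, the estimate reduces to controlling (i) $\mathcal{L}(\partial_k u)$ for $k\in\{\alpha,n\}$; (ii) $\mathcal{L}$ of the fixed smooth functions $\eta,\partial_\alpha\varphi,\partial_n\varphi$; and (iii) the cross term $F^{ij}\nabla_i\eta\,\nabla_j\big(\partial_n(u-\varphi)\big)$.

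It is convenient to record two elementary bounds, both read off in an orthonormal frame diagonalizing $\mathfrak{g}$, where $F^{ij}=f_i\delta_{ij}$ and $\mathfrak{g}_{ij}=\lambda_i\delta_{ij}$: first $\sum_{i,j}|F^{ij}|\le C\sum_i f_i$, so that $|F^{ij}\xi_i\zeta_j|\le C|\xi|\,|\zeta|\sum_i f_i$; and second the endomorphism $\big(\sum_j F^{ij}\mathfrak{g}_{kj}\big)_{ik}$ has operator norm $\le\sum_i f_i|\lambda_i|$, so that $|F^{ij}\mathfrak{g}_{kj}\,\xi_i\zeta_k|\le C|\xi|\,|\zeta|\sum_i f_i|\lambda_i|$ for any fixed vectors $\xi,\zeta$ of bounded length. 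Part (ii) is then immediate: $|\nabla_{ij}\eta|,|\nabla_{ij}\partial_k\varphi|\le C$ give $|\mathcal{L}\eta|+|\mathcal{L}\partial_\alpha\varphi|+|\mathcal{L}\partial_n\varphi|\le C\sum f_i$, and the bounded factors $|\eta|\le C$ and $|\partial_n(u-\varphi)|\le C(1+\sup_M|\nabla u|)$ keep these contributions within the form claimed in \eqref{pili1}.

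The substance is in (i). Differentiating $F(\mathfrak{g})=\psi$ along $\partial_k$ gives $F^{ij}\nabla_k\mathfrak{g}_{ij}=\nabla_k\psi$, i.e. $F^{ij}\nabla_{kij}u=\nabla_k\psi-F^{ij}\nabla_k\chi_{ij}$, with $\nabla_{kij}u$ a third covariant derivative of $u$. Commuting the covariant derivatives replaces $F^{ij}\nabla_{kij}u$ by $F^{ij}\nabla_{ijk}u$ up to a curvature term bounded by $C|\nabla u|\sum f_i$, and comparing $\nabla_{ijk}u$ with $\nabla_{ij}(\partial_k u)$ via \eqref{direct-compute1}--\eqref{direct-compute2} introduces only remainders of the schematic type $F^{ij}(\partial\Gamma)\,\nabla u$ (bounded by $C|\nabla u|\sum f_i$) and $F^{ij}\,\Gamma\,\nabla^2 u$; writing $\nabla^2 u=\mathfrak{g}-\chi$ in the latter and applying the two bounds above gives a contribution $\le C\big(\sum f_i|\lambda_i|+\sum f_i\big)$. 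Since $\nabla_k\psi$ and $F^{ij}\nabla_k\chi_{ij}$ only cost $|\psi|_{C^1}+|\chi|_{C^1}\sum f_i$, we obtain $|\mathcal{L}(\partial_k u)|\le C\big(1+(1+\sup_M|\nabla u|)\sum f_i+\sum f_i|\lambda_i|\big)$. The same reasoning handles (iii): $\nabla_j\big(\partial_n(u-\varphi)\big)$ equals $\mathfrak{g}_{jn}$ plus terms bounded by $C(1+\sup_M|\nabla u|)$, so contracting against $F^{ij}\nabla_i\eta$ yields one term $\le C\sum f_i|\lambda_i|$ and one $\le C(1+\sup_M|\nabla u|)\sum f_i$. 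Assembling (i)--(iii) proves \eqref{pili1}.

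Two remarks. All constants depend only on $|\varphi|_{C^3}$, $|\chi|_{C^1}$, $|\psi|_{C^1}$, the curvature of $(M,g)$ and the local geometry of $\partial M$: the equation is differentiated exactly once and nothing is ever divided by $\inf_M\psi-\sup_{\partial\Gamma}f$, so the bound is manifestly independent of $(\delta_{\psi,f})^{-1}$, which is precisely what is needed for the degenerate problem. The only use of regularity is $u\in C^3$ near $\partial M$, so that $\mathcal{T}(u-\varphi)\in C^2(\Omega_\delta)$ and $\mathcal{L}$ legitimately acts on it. I expect the sole difficulty to be bookkeeping: arranging every curvature and Christoffel remainder so that it falls into one of the three admissible buckets $1$, $(1+\sup_M|\nabla u|)\sum f_i$, $\sum f_i|\lambda_i|$, and in particular never produces a genuinely worse term such as $\sum f_i|\lambda_i|^2$ or $|\nabla^2 u|\sum f_i$.
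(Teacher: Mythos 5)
Your proof is correct and takes essentially the same route as the paper's two-sentence sketch: differentiate \eqref{mainequ} once, pass between $\nabla_{ij}(\partial_k u)$ and $\nabla_{ijk}u$ via \eqref{direct-compute1}--\eqref{direct-compute2}, and sort the Christoffel and curvature remainders into the buckets $1$, $(1+\sup_M|\nabla u|)\sum f_i$, $\sum f_i|\lambda_i|$ using the simultaneous diagonalizability of $F^{ij}$ and $\mathfrak{g}_{ij}$. You also correctly supply the $F^{ij}\nabla_\alpha\chi_{ij}$ term and the commutator that the paper's displayed identity $F^{ij}\nabla_{ij\alpha}u=\nabla_\alpha\psi$ elides, as well as the product-rule bookkeeping for the $\eta\,\partial_n$ part of $\mathcal{T}$, none of which changes the nature of the argument.
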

 
 \begin{proof}
 	Differentiating the equation one has 
 	\begin{equation}
 		\begin{aligned}
 			F^{ij} \nabla_{ij\alpha}u =\nabla_\alpha \psi. \nonumber
 		\end{aligned}
 	\end{equation}
 	Combining with \eqref{direct-compute1}-\eqref{direct-compute2} one derives \eqref{pili1}.
 \end{proof}
 Proposition \ref{mix-prop1} can be proved by constructing barrier functions similar to that used in \cite{yuan2021-2}
 in complex variables. The construction of this type of  barriers follows \cite{Guan1993Boundary}. 
 Let's take
 \begin{equation}
 	\label{ggg}
 	\begin{aligned}
 		\widetilde{\Psi}=\,&
 		A_{1}  \sqrt{b_{1}}(\underline{u}-u)-A_{2}\sqrt{b_{1}}\rho^{2}+A_{3}\sqrt{b_{1}}(N\sigma^{2}-t\sigma)
 		\\ \,&
 		+\frac{1}{\sqrt{b_1}}\sum_{\tau<n}|\nabla_\tau(u-\varphi)|^2+ \mathcal{T}(u-\varphi), \nonumber
 	\end{aligned}
 \end{equation}
 where $b_{1}=1+\sup_M|\nabla u|^{2}$.

 Let $\delta>0$ and $t>0$ be sufficiently small such that $N\delta-t\leq 0$
 (where $N$ is a positive constant sufficiently large to be determined), $\sigma$ is $C^2$  in $\Omega_{\delta}$  
 and
 \begin{equation}
 	\label{bdy1}
 	\begin{aligned}
 		\frac{1}{2} \leq |\nabla \sigma|\leq 2,  \mbox{  }
 		|\mathcal{L}\sigma | \leq   C_2\sum_{i=1}^n f_i,     \mbox{  }
 		|\mathcal{L}\rho^2| \leq C_2\sum_{i=1}^n f_{i}  \mbox{ in } \Omega_{\delta}
 	\end{aligned}
 \end{equation}
 and
 \begin{equation}
 	\label{bdygood1101}
 	\begin{aligned}
 		\mathcal{L}(N\sigma^2-t\sigma)=(2N\sigma-t) \mathcal{L}\sigma+2NF^{ij}\nabla_i\sigma \nabla_j\sigma \mbox{ in } \Omega_{\delta}.
 	\end{aligned}
 \end{equation}
 Furthermore, we  can choose $\delta$ and $t$  small enough  such that $|2N\delta-t|$ is small.

 By straightforward calculation and 
 $|a-b|^2\geq  \frac{1}{2}|a|^2- |b|^2$, one derives
 \begin{equation}
 	\label{bdygood1}
 	\begin{aligned}
 		\mathcal{L} (\sum_{\tau<n}|\nabla_{\tau} (u-\varphi)|^2  )
 		\geq
 		\sum_{\tau<n}F^{ij} \mathfrak{g}_{\tau i} \mathfrak{g}_{\tau  j}
 		-C_1'\sqrt{b_1} \sum_{i=1}^n  f_{i}|\lambda_{i}| 
 		-C_1' b_1\sum_{i=1}^n  f_{i}
 		-C_1' \sqrt{b_{1}}.
 	\end{aligned}
 \end{equation}
 By \cite[Proposition 2.19]{Guan12a}, there is an index $r$ such that
 \begin{equation}
 	\label{beeee}
 	\begin{aligned}
 		\sum_{\tau<n} F^{ij}\mathfrak{g}_{\tau i}\mathfrak{g}_{\tau j}\geq 
 		\frac{1}{2}\sum_{i\neq r} f_{i}\lambda_{i}^{2}.\\ \nonumber
 	\end{aligned}
 \end{equation}
 
 Using 
 Lemma \ref{lemma3.4} and $\sum_{i=1}^n f_i (\underline{\lambda}_i-\lambda_i)\geq 0$ respectively, we obtain
 \[\sum_{i=1}^n f_i |\lambda_i|=2\sum_{\lambda_i\geq 0}f_i \lambda_i-\sum_{i=1}^n f_i\lambda_i<2\sum_{\lambda_i\geq 0} f_i\lambda_i,\]
 \[\sum_{i=1}^n f_i |\lambda_i|= \sum_{i=1}^nf_i \lambda_i-
 2\sum_{\lambda_i<0}  f_i\lambda_i<\sum_{i=1}^n f_i\underline{\lambda}_i-2\sum_{\lambda_i<0} f_i\lambda_i.\]
In conclusion, we have
 \begin{equation}
 	\label{yuan316}
 	\begin{aligned} 
 		\sum_{i=1}^n f_i |\lambda_i| \leq \frac{\epsilon}{4 \sqrt{b_1}} \sum_{i\neq r} f_i\lambda_i^2+  \frac{ C\sqrt{b_1}}{\epsilon} \sum_{i=1}^n f_i.	
 	\end{aligned}
 \end{equation}
 
 By \eqref{bdy1},  \eqref{bdygood1101}, \eqref{bdygood1}, Lemma \ref{DN} and \eqref{yuan316}, we obtain
 \begin{equation}
 	\label{bdycrucial}
 	\begin{aligned}
 		\mathcal{L}(\widetilde{\Psi}) \geq \,& A_1 \sqrt{b_1} \mathcal{L}(\underline{u}-u)
 		+\frac{1}{2\sqrt{b_1}} \sum_{i\neq r} f_{i}\lambda_{i}^{2}
 		+A_3 \sqrt{b_1}\mathcal{L}(N\sigma^{2}-t\sigma)
 		\\
 		\,&
 		-C_1
 		-C_1 \sum_{i=1}^n  f_{i}|\lambda_{i}|
 		-\left(A_2C_2 +C_1 \right) \sqrt{b_1} \sum_{i=1}^n  f_{i} \\ 
 		\geq \,& A_1 \sqrt{b_1} \mathcal{L}(\underline{u}-u)
 		+2NA_3 \sqrt{b_1} F^{ij}\nabla_i\sigma \nabla_j\sigma
 		\\
 		\,&
 		+A_3  (2N\sigma-t) \sqrt{b_1} \mathcal{L}\sigma
 		-C \sqrt{b_1} \sum_{i=1}^n  f_{i}-C.
 	\end{aligned}
 \end{equation}
 
 
 Next we will prove
 \begin{equation}
 	\label{bdy001good}
 	\begin{aligned}
 		\mathcal{L}\widetilde{\Psi}
 		\geq 0,   \mbox{ in } \Omega_{\delta}  \nonumber
 	\end{aligned}
 \end{equation}
 for $0<\delta\ll1$, if we appropriately choose $A_1\gg A_2\gg A_3>1$, $N\gg1$ and $0<t\ll1$.

 {\bf Case I}: If $|\nu_{\lambda }-\nu_{\underline{\lambda} }|\geq \beta_0$,   then by Lemma \ref{guan2014}
 we have
 \begin{equation}
 	\label{guan-key1}
 	\begin{aligned}
 		\sum_{i=1}^n f_{i}(\underline{\lambda}_i-\lambda_i)  \geq \varepsilon  \left(1+\sum_{i=1}^n f_i\right),
 	\end{aligned}
 \end{equation}
 where we take $\beta_0= \frac{1}{2}\min_{\bar M} dist(\nu_{\underline{\lambda} }, \partial \Gamma_n)$ as above,
 $\varepsilon$ is the positive constant in Lemma \ref{guan2014}. 
 Taking $A_1\gg A_3\gg 1$ we 
 obtain $$\mathcal{L}\widetilde{\Psi} \geq 0 \mbox{ on } \Omega_\delta.$$
 
 {\bf Case II}: Suppose that $|\nu_{\lambda }-\nu_{\underline{\lambda} }|<\beta_0$. Then
 $\nu_{\lambda }-\beta_0 \vec{\bf 1} \in \Gamma_{n}$ and
 \begin{equation}
 	\label{2nd-case1}
 	\begin{aligned}
 		f_{i} \geq  \frac{\beta_0}{\sqrt{n}} \sum_{j=1}^n f_{j}. 
 	\end{aligned}
 \end{equation}
 By \eqref{bdy1}, we have $|\nabla \sigma|\geq\frac{1}{2}$ in $\Omega_\delta$, then 
 \begin{equation}
 	\label{bbvvv}
 	\begin{aligned}
 		2A_1N \sqrt{b_1} F^{i\bar j}\sigma_i \sigma_{\bar j} \geq \frac{A_1N \beta_0 \sqrt{b_1}}{2\sqrt{n}}\sum_{i=1}^n f_i  \mbox{ on } \Omega_\delta.
 	\end{aligned}
 \end{equation}
 On the other hand,    $\mathcal{L}(\underline{u}-u)\geq 0$.
 Thus  $$\mathcal{L}(\widetilde{\Psi}) \geq 0 \mbox{ on }\Omega_\delta, \mbox{ if } A_1N\gg 1.$$

 The boundary value condition $u-\varphi=0$ on $\partial M$ implies  $\mathcal{T}(u-\varphi)=0$ and $|\nabla_\tau(u-\varphi)|\leq C\rho$ on $\partial M\cap \overline{\Omega}_{\delta}$.  Thus
 \begin{equation}
 	\label{yuan-rr1}
 	\begin{aligned}
 		\widetilde{\Psi}= \,&A_{1}\sqrt{b_{1}}(\underline{u}-u)-A_{2}\sqrt{b_{1}}\rho^{2}+A_{3}\sqrt{b_{1}}(N\sigma^{2}-t\sigma)
 		\\ \,&
 		+\frac{1}{\sqrt{b_1}}\sum_{\tau<n}|\nabla_\tau(u-\varphi)|^2+ \mathcal{T}(u-\varphi) 
 		\leq  0, \mbox{ on } \partial   M\cap  \overline{\Omega}_{\delta}.  \nonumber
 	\end{aligned}
 \end{equation}
 Note that $\rho=\delta$ and $\underline{u}-u\leq 0$ on $M\cap \partial\Omega_{\delta}$.
 Hence, if  $A_2\gg 1$ then
 $\widetilde{\Psi}\leq 0$
 on  $M\cap \partial\Omega_{\delta}$,
 where we  use  $N\delta-t\leq 0$.
 Therefore $\widetilde{\Psi}\leq 0$ in $\Omega_{\delta}$ by applying maximum principle. 
 Together with $\widetilde{\Psi}(0)=0$, one has
 $\nabla_\nu\widetilde{\Psi} (0)\leq 0$.  
 Thus
 \begin{equation}
 	\begin{aligned}
 		\nabla_{\nu} \mathcal{T}(u-\varphi)(0) 
 		\leq C' (1+\sup_M |\nabla u|).  \nonumber
 	\end{aligned}
 \end{equation} 
 Here we use \eqref{c0-bdr-c1}.
 Therefore
 \begin{equation}
 	\label{basa}
 	\begin{aligned}
 		\pm\nabla_{\alpha n}u\leq C (1+\sup_{\bar M}|\nabla u|), \mbox{ at } x_0,
 	\end{aligned}
 \end{equation}
 where $C$  depends only on  
 $|\varphi|_{C^{3}(M)}$,
 $|\underline{u}|_{C^{2}(M)}$
 $|\psi|_{C^{1}(M)}$
 and other known data
 (but not on $\sup_{M}|\nabla u|$).
 
 Moreover,   the constant $C$ in \eqref{basa}  does not
 depend on $(\delta_{\psi,f})^{-1}$.

 \section{Quantitative boundary estimate for pure normal derivatives}
 \label{sec4}
 
 Given $x_0\in \partial  M$.
 As in \eqref{coordinate1}, we choose local coordinate $x=(x_1,\cdots,x_n)$    with origin at $x_0$ 
 such that, when restricted to $\partial M$, $\frac{\partial}{\partial x_n}$ is a inner normal vector to $\partial M$, and 
 $g_{ij}(x_0)=\delta_{ij}$.  	Furthermore, we assume $\{\mathfrak{\underline{g}}_{\alpha\beta}\}$ is diagonal at $x_0$.
 
When imposing assumption \eqref{bdry-assum1} on boundary we can derive 
 \begin{proposition}
 	\label{proposition-quar-yuan1}
 	
 	Let $(M,g)$ be a compact Riemannian manifold with smooth boundary satisfying \eqref{bdry-assum1}. 
 	 Suppose, in addition to 
 	 \eqref{elliptic}, \eqref{concave}, \eqref{addistruc} and 
 	 \eqref{nondegenerate},
 	that there exists an admissible subsolution $\underline{u}\in C^2(\bar M)$. Then 
  any admissible solution $u\in C^2(\bar M)$ of the Dirichlet problem \eqref{mainequ} satisfies
 	\begin{equation}
 		\begin{aligned}
 			\nabla_{n n}u(x_0)\leq C\left(1+\sum_{\alpha=1}^{n-1}|\mathfrak{g}_{\alpha n}|^2\right),  \quad \forall x_0\in \partial M,  \nonumber
 		\end{aligned}
 	\end{equation}
 	where $C$ does not depend on $(\delta_{\psi,f})^{-1}$.
 	
 \end{proposition}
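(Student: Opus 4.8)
The plan is to estimate $\nabla_{nn}u(x_0)$ at the fixed boundary point $x_0$ by combining the algebraic eigenvalue Lemma~\ref{yuan's-quantitative-lemma} with the concavity/subsolution inequalities, the boundary relation~\eqref{yuanr-1}, and the geometric hypothesis~\eqref{bdry-assum1}. In the coordinates of the section, after one further rotation of the tangential frame I may assume the tangential block $\mathfrak g'(x_0):=\{\mathfrak g_{\alpha\beta}(x_0)\}_{\alpha,\beta<n}$ is diagonal, with entries $d_\alpha=\mathfrak g_{\alpha\alpha}(x_0)$; this changes neither $\mathfrak g_{nn}(x_0)$, nor $\sum_{\alpha<n}|\mathfrak g_{\alpha n}(x_0)|^2$, nor the spectrum of $\mathfrak g'(x_0)$. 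By~\eqref{c0-bdr-c1}, the fact that on $\partial M$ the tangential second derivatives of $u$ are determined by $\varphi,\chi,\nabla_n u$, and $\underline{u}\le u\le h$ (which gives $0\le\nabla_n(u-\underline{u})(x_0)\le\nabla_n(h-\underline{u})(x_0)\le C_0$), the $d_\alpha$ are bounded by the data alone. Now fix a small $\epsilon>0$, to be chosen in terms of $(f,\Gamma)$ and $\underline{u}$. If $\mathfrak g_{nn}(x_0)$ does not exceed the right side of~\eqref{guanjian1-yuan} with $a_\alpha=\mathfrak g_{\alpha n}(x_0)$ and the above $d_\alpha$, then since $\sum_{\alpha<n}|d_\alpha|\le C$ this already yields $\nabla_{nn}u(x_0)\le C(1+\sum_{\alpha<n}|\mathfrak g_{\alpha n}(x_0)|^2)$. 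Otherwise Lemma~\ref{yuan's-quantitative-lemma} applies to $A=\mathfrak g(x_0)$ and gives, after a permutation, $|\lambda_\alpha-d_\alpha|<\epsilon$ for $\alpha<n$ and $0\le\lambda_n-\mathfrak g_{nn}(x_0)<(n-1)\epsilon$, where $\lambda=\lambda(\mathfrak g(x_0))$; so $\lambda':=(\lambda_1,\dots,\lambda_{n-1})$ is bounded and $\lambda_n$ is comparable to $\mathfrak g_{nn}(x_0)=\nabla_{nn}u(x_0)+\chi_{nn}(x_0)$, which we may assume is large.

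The first key step is to locate the tangential spectrum. By~\eqref{nondegenerate} and $\underline{u}\in C^2(\bar M)$ the vectors $\lambda(\mathfrak{\underline{g}}(x))$, $x\in\bar M$, range over a fixed compact $K\subset\subset\Gamma$, so there is a uniform $\delta_0>0$ with $\lambda(\mathfrak{\underline{g}}(x))-\delta_0\vec{\bf 1}\in\Gamma$. Adding a growing multiple of $E_{nn}$ to $\mathfrak{\underline{g}}(x_0)-\delta_0 I$ and using the monotonicity of $\Gamma$ shows $\lambda(\mathfrak{\underline{g}}'(x_0))-\delta_0\vec{\bf 1}\in\Gamma_\infty$. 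Next, from~\eqref{yuanr-1} one has $\mathfrak g'(x_0)=\mathfrak{\underline{g}}'(x_0)+\nabla_n(u-\underline{u})(x_0)\,B$ with $\nabla_n(u-\underline{u})(x_0)\ge0$, where $B:=\{\nabla_{\alpha\beta}\sigma(x_0)\}/\nabla_n\sigma(x_0)$ has spectrum $(-\kappa_1,\dots,-\kappa_{n-1})\in\overline{\Gamma}_\infty$ by~\eqref{bdry-assum1}; decomposing $\mathrm{diag}(\mathfrak g'(x_0),T)$ for large $T$ as a sum of two matrices whose spectra lie in $\Gamma$, and invoking the classical convexity of $\{A:\lambda(A)\in\Gamma\}$, one concludes $\lambda(\mathfrak g'(x_0))-c_0\vec{\bf 1}\in\Gamma_\infty$ for a uniform $c_0>0$; taking $\epsilon<c_0$, the same holds for $\lambda'$ with $c_0$ replaced by $c_0-\epsilon$. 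In particular $(\lambda',R)\in\Gamma$ for all large $R$, with all constants independent of $\delta_{\psi,f}$.

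The second key step is a ``gap at infinity'': $\ell(\lambda'):=\lim_{s\to+\infty}f(\lambda',s)\ge\psi(x_0)+\delta$ for a uniform $\delta>0$. If $\ell(\lambda')=+\infty$ this is trivial (take $R_0$ with $f(\lambda',R_0)>\sup_M\psi$); if $\ell(\lambda')<\infty$, I would first argue $\ell(\lambda')\ge\ell(\lambda(\mathfrak{\underline{g}}'(x_0)))$ by writing $\mathrm{diag}(\mathfrak g'(x_0),s)=\mathrm{diag}(\mathfrak{\underline{g}}'(x_0),s/2)+\mathrm{diag}(\nabla_n(u-\underline{u})(x_0)B,s/2)$ and using~\eqref{concave} together with Lemma~\ref{lemma-new-1} (valid since~\eqref{addistruc} holds, by Lemma~\ref{lemma3.4}, applied to the second summand whose spectrum lies in $\overline{\Gamma}$) to get that $f$ of the sum is $\ge f$ of the first summand, then letting $s\to\infty$. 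By eigenvalue interlacing $\lambda(\mathfrak{\underline{g}}'(x_0))$ dominates $(\underline{\lambda}_1,\dots,\underline{\lambda}_{n-1})$ componentwise, where $\underline{\lambda}=\lambda(\mathfrak{\underline{g}}(x_0))$, so $\ell(\lambda(\mathfrak{\underline{g}}'(x_0)))\ge f((\underline{\lambda}_1,\dots,\underline{\lambda}_{n-1}),C_{\underline{u}})\ge f(\underline{\lambda})\ge\psi(x_0)$, with $C_{\underline{u}}$ a bound for $\underline{\lambda}_n$; and since $f_n>0$, the quantity $\ell(\mu')-f((\mu'),C_{\underline{u}})$ is strictly positive for every $\mu'$ in the fixed compact set where $\lambda(\mathfrak{\underline{g}}'(x_0))$ lives, so its infimum $\delta>0$ (depending only on $\underline{u}$ and the data) does the job up to the harmless $O(\epsilon)$ loss. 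Finally, $s\mapsto f(\lambda',s)$ is increasing with concave, hence continuous, limit $\ell(\lambda')$, so Dini's theorem supplies a uniform $R_0$ with $f(\lambda',R_0)\ge\psi(x_0)+\delta/2$; if $\mathfrak g_{nn}(x_0)\ge R_0$ then $\lambda\ge(\lambda'-\epsilon\vec{\bf 1},R_0)$ componentwise, and choosing $\epsilon$ small gives the contradiction $\psi(x_0)=f(\lambda)\ge f(\lambda'-\epsilon\vec{\bf 1},R_0)\ge\psi(x_0)+\delta/4$; hence $\nabla_{nn}u(x_0)\le R_0-\inf_M\chi_{nn}\le C$.

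The genuinely delicate point is the combination of the first step with uniformity: one must place $\lambda(\mathfrak g'(x_0))$ inside a fixed compact subset of $\Gamma_\infty$ with a room $c_0>0$ that does \emph{not} shrink as $\inf_M\psi\downarrow\sup_{\partial\Gamma}f$, and then propagate this room through the gap estimate. In the classical (nondegenerate, barrier-based) treatments the gap $\inf_M\psi-\sup_{\partial\Gamma}f>0$ is simply available, whereas here it has to be manufactured from the fixed subsolution via~\eqref{addistruc} and from the boundary convexity~\eqref{bdry-assum1}; should a purely pointwise version of the first step prove awkward, one can instead build an auxiliary barrier out of $\underline{u}$ in $\Omega_\delta$ in the spirit of the proof of Proposition~\ref{mix-prop1}.
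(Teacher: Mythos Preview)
Your argument is essentially correct and uses the same three ingredients as the paper: the eigenvalue perturbation Lemma~\ref{yuan's-quantitative-lemma}, the boundary identity~\eqref{yuanr-1} (equivalently~\eqref{puretangential2}), and the monotonicity $F(A+C)\ge F(A)$ for $\lambda(C)\in\overline\Gamma$ coming from Lemma~\ref{lemma3.4} together with~\eqref{bdry-assum1}. The organization, however, is genuinely different.

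The paper diagonalizes the \emph{subsolution} tangential block $\{\underline{\mathfrak g}_{\alpha\beta}\}$, not $\{\mathfrak g_{\alpha\beta}\}$, and then splits the full matrix as
\[
A(R)=\bigl[\underline A(R)-\varepsilon_1\nabla_n(u-\underline u)I(r_0/\varepsilon_1)\bigr]+\nabla_n(u-\underline u)\bigl[B+\varepsilon_1 I(r_0/\varepsilon_1)\bigr],
\]
where the second summand has spectrum in $\Gamma$ by~\eqref{bdry-assum1}. Lemma~\ref{lemma3.4} immediately gives $F(A(R))\ge F(\underline H(R))$, and since $\underline H(R)$ has its tangential diagonal entries equal to $\underline{\mathfrak g}_{\alpha\alpha}-\varepsilon_1\nabla_n(u-\underline u)$, Lemma~\ref{yuan's-quantitative-lemma} applied to $\underline H(R_c)$ places its spectrum at $(\underline{\mathfrak g}_{11}-\tfrac{\varepsilon_0}{4},\dots,\underline{\mathfrak g}_{(n-1)(n-1)}-\tfrac{\varepsilon_0}{4},R_c)+\overline\Gamma_n$, and the gap is then read off directly from the a priori fact~\eqref{opppp} that $f(\underline{\mathfrak g}_{11}-\varepsilon_0,\dots,R_0)\ge\psi$. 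No Dini argument, no explicit location of $\lambda(\mathfrak g')$ in $\Gamma_\infty$, and no compactness over the range of $\lambda'$ are needed.

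By contrast you diagonalize $\mathfrak g'$, apply Lemma~\ref{yuan's-quantitative-lemma} to $\mathfrak g(x_0)$ itself, and are then forced to (i) locate $\lambda(\mathfrak g')$ uniformly inside $\Gamma_\infty$, (ii) manufacture the gap $\ell(\lambda')\ge\psi+\delta$ via interlacing and the monotonicity lemma, and (iii) invoke a Dini/compactness step to pass from the limit $\ell$ to a finite $R_0$. All of this is sound (in particular your case split $\ell\equiv+\infty$ versus $\ell<\infty$ is legitimate because a concave $\ell$ that is $+\infty$ at one interior point of $\Gamma_\infty$ is $+\infty$ everywhere), but it is considerably longer. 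The payoff of the paper's arrangement is that the ``gap'' is a single line~\eqref{opppp} about the fixed subsolution, with no soft analysis; the payoff of yours is that it makes transparent exactly where~\eqref{bdry-assum1} and~\eqref{addistruc} enter and why the constant does not see $(\delta_{\psi,f})^{-1}$.
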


 \begin{proof}

 	In what follows the discussion is done at $x_0$, and the Greek letters $\alpha, \beta$ range from $1$ to $n-1$.
 	By \eqref{yuanr-1}
 	\begin{equation}
 		\label{puretangential2}
 		\begin{aligned}
 			\mathfrak{g}_{\alpha\beta} =\underline{\mathfrak{g}}_{\alpha \beta} +\nabla_n(u-\underline{u})\nabla_{\alpha\beta}\sigma
 		\end{aligned}
 	\end{equation}

 	The proof consists of two steps.
 	
 	\noindent{\bf Step 1}.
 	As in 
 	\cite{yuan2021-2} there exist two uniform positive constants $\varepsilon_{0}$, $R_{0}$
 	depending  on  $\mathfrak{\underline{g}}$  and $f$, such that
 	\begin{equation}
 		\label{opppp}
 		\begin{aligned}
 			 \,& 
 			f(\mathfrak{\underline{g}}_{1  1}-\varepsilon_{0}, \cdots, \underline{\mathfrak{g}}_{(n-1) {(n-1)}}-\varepsilon_0, R_0)\geq \psi, \quad
 	 	\\	\,&
 			(\mathfrak{\underline{g}}_{11}-\varepsilon_{0}, \cdots, \underline{\mathfrak{g}}_{(n-1){(n-1)}}-\varepsilon_0, R_0)\in \Gamma.
 		\end{aligned}
 	\end{equation}
 	

 	\noindent{\bf Step 2}.  Next,  we apply 
 	Lemmas \ref{yuan's-quantitative-lemma}  and \ref{lemma3.4} 
 	derive the quantitative boundary estimates for double normal derivative.
 	Let's denote
 	\begin{equation}
 		I(r)=\left(
 		\begin{matrix}
 			1&  &&0\\
 			&\ddots&&\vdots \\
 			& & 1& 0\\
 			0&\cdots& 0& r \nonumber
 		\end{matrix}
 		\right)
 	\end{equation}
 	\begin{equation}
 		A(R)=\left(
 		\begin{matrix}
 			\mathfrak{g}_{1 1} &\mathfrak{g}_{12}
 			&\cdots&\mathfrak{g}_{1 (n-1)}   & \mathfrak{g}_{1 n}\\
 			\mathfrak{g}_{2 1}&\mathfrak{g}_{22}&\cdots& \mathfrak{g}_{2 (n-1)} &\mathfrak{g}_{2n}\\
 			\vdots&\vdots&\ddots& \vdots&\vdots \\
 			\mathfrak{g}_{(n-1) 1}& \mathfrak{g}_{(n-1) 2}& \cdots &  \mathfrak{g}_{{(n-1)}{(n-1)}}& \mathfrak{g}_{(n-1) n}\\
 			\mathfrak{g}_{n 1}&\mathfrak{g}_{n 2}&\cdots& \mathfrak{g}_{n {(n-1)}}& R  \nonumber
 		\end{matrix}
 		\right)
 	\end{equation}
 	\begin{equation}
 		\underline{A}(R)=\left(
 		\begin{matrix}
 			\mathfrak{\underline{g}}_{11}&&  &&\mathfrak{g}_{1 n}\\
 			&\mathfrak{\underline{g}}_{22}&& &\mathfrak{g}_{2 n}\\
 			&&\ddots&&\vdots \\
 			&& &  \mathfrak{\underline{g}}_{{(n-1)} {(n-1)}}& \mathfrak{g}_{(n-1) n}\\
 			\mathfrak{g}_{n1}&\mathfrak{g}_{n2}&\cdots& \mathfrak{g}_{n{(n-1)}}& R  \nonumber
 		\end{matrix}
 		\right)
 	\end{equation}
 	\begin{equation}
 		B=\left(
 		\begin{matrix}
 			\nabla_{1 1}\sigma&	\nabla_{12}\sigma&\cdots&	\nabla_{1 (n-1)}\sigma   & 	0\\
 			\nabla_{2 1}\sigma&	\nabla_{22}\sigma&\cdots&  \nabla_{2 (n-1)}\sigma &0\\
 			\vdots&\vdots&\ddots& \vdots&\vdots \\
 			\nabla_{(n-1) 1}\sigma& \nabla_{(n-1) 2}\sigma& \cdots &  \nabla_{{(n-1)}{(n-1)}\sigma}& 0\\
 			0&0&\cdots& 0& 0  \nonumber
 		\end{matrix}
 		\right)
 	\end{equation}
 	
 	Let $0<\varepsilon_1\ll1$ satisfy 
 	\[\varepsilon_1 \nabla_n(u-\underline{u})<\frac{\varepsilon_0}{8}.\]
 	The assumption \eqref{bdry-assum1} on boundary implies that, for   the fixed $\varepsilon_1\in \mathbb{R}^+$ given above, there exists a uniform positive constant $r_0$ such that
 	\begin{equation}
 		\label{y-43}
 		B+\varepsilon_1 I(r_0/\varepsilon_1)\in\Gamma.	\end{equation}
 	We fix $\varepsilon_0$, $\varepsilon_1$, and $r_0$ that we have chosen. From \eqref{opppp} we see that
 	\begin{equation}
 		\begin{aligned}
 			\lambda(\underline{A}(R)-\varepsilon_1\nabla_n(u-\underline{u})I(r_0/\varepsilon_1))\in \Gamma, \quad R\geq R_0+r_0\nabla_n(u-\underline{u}).
 		\end{aligned}
 	\end{equation}
 	
 	On the other hand, the identity \eqref{puretangential2} means that
 	\begin{equation}
 		\begin{aligned}
 			A(R)= \left[\underline{A}(R)-\varepsilon_1\nabla_n(u-\underline{u})I(r_0/\varepsilon_1)
 			\right]
 			+\nabla_n(u-\underline{u})[B+\varepsilon_1 I(r_0/\varepsilon_1)].  \nonumber
 		\end{aligned}
 	\end{equation}
 	For simplicity, we denote
 	\[\underline{H}(R)=\underline{A}(R)-\varepsilon_1\nabla_n(u-\underline{u})I(r_0/\varepsilon_1).\]	
 Notice that $\nabla_n(u-\underline{u})\geq0$.
 According to Lemma \ref{lemma3.4} we obtain
 	\begin{equation}
 		\label{key-y-1}
 		\begin{aligned}
 			f(A(R))\geq f(\underline{H}(R)).	
 		\end{aligned}
 	\end{equation}

 	Let's pick the parameter $\epsilon=\frac{\varepsilon_0}{8}$  in  Lemma \ref{yuan's-quantitative-lemma},
 	and we assume
 	\begin{equation}
 		\begin{aligned}
 			R_c= \frac{8(2n-3)}{\varepsilon_0}
 			\sum_{\alpha=1}^{n-1} | \mathfrak{g}_{\alpha n}|^2
 			+ (n-1)\sum_{\alpha=1}^{n-1}   \left(|\mathfrak{\underline{g}}_{\alpha \alpha}|+\frac{\varepsilon_0}{8}\right)
 			+\frac{(n-2)\varepsilon_0}{8(2n-3)} +R_0+r_0\nabla_n(u-\underline{u}), \nonumber
 		\end{aligned}
 	\end{equation}
 	where  $\varepsilon_0$ and $R_0$ are the constants from \eqref{opppp}.
 	
 	Lemma \ref{yuan's-quantitative-lemma} applies to
 	$\underline{H}(R_c)$ and
 	the eigenvalues of $\underline{H}(R_c)$ 
 	(possibly with an order) shall behave like
 	\begin{equation}
 		\label{lemma12-yuan}
 		\begin{aligned}
 			\lambda(\underline{H}(R_c)) \in
 			\left(\mathfrak{\underline{g}}_{1\bar 1}-\frac{\varepsilon_0}{4},\cdots,
 			\mathfrak{\underline{g}}_{(n-1) \overline{(n-1)}}-\frac{\varepsilon_0}{4},R_c\right) +\overline{\Gamma}_n \subset \Gamma.
 		\end{aligned}
 	\end{equation}
 	Applying \eqref{elliptic},  
 	\eqref{opppp}, \eqref{key-y-1} and  \eqref{lemma12-yuan},  we hence derive
 	\begin{equation}
 		\begin{aligned}
 			F(A(R_c))\geq  F(\underline{H}(R_c)) 
 			\geq 
 			f(\mathfrak{\underline{g}}_{11}-\frac{\varepsilon_0}{4},\cdots,
 			\mathfrak{\underline{g}}_{(n-1){(n-1)}}-\frac{\varepsilon_0}{4},R_c)
 			> \psi \nonumber
 		\end{aligned}
 	\end{equation}
which then gives $\mathfrak{g}_{n n}(x_0) \leq R_c.$

 \end{proof}

 \section{Proof of existence results}
 \label{sec5}
 

 
 The condition \eqref{addistruc} is necessary for the blow-up argument in \cite{Gabor}. 
 \begin{lemma}
 	[\cite{yuan2021-2}]
 	\label{lemma1-con-addi}
 	Suppose $f$ satisfies \eqref{continuity1}-\eqref{concave}.
 	Then it obeys \eqref{addistruc}.
 \end{lemma}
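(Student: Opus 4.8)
The plan is to reduce, via Lemma \ref{lemma3.4}, to verifying one of the equivalent conditions in that lemma, the most convenient target being \eqref{addistruc-0}: for every $\lambda\in\Gamma$ the limit $\lim_{t\to+\infty}f(t\lambda)$ is finite. That this limit exists in $[-\infty,+\infty]$ is immediate from \eqref{concave}, since $t\mapsto f(t\lambda)$ is concave on $(0,+\infty)$ and hence eventually monotone; so the whole point is to exclude the value $-\infty$. (One could equally aim at \eqref{addistruc-3}, i.e. $\sum_i f_i(\lambda)\lambda_i\ge 0$, which is the infinitesimal form of the same fact.)

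First I would collect the elementary structural facts to be used. Since $\Gamma$ is an open symmetric convex cone with vertex at the origin and $\Gamma_n\subseteq\Gamma$, one gets $\Gamma_n\subseteq\Gamma\subseteq\Gamma_1:=\{\lambda:\sum_i\lambda_i>0\}$: averaging $\lambda\in\Gamma$ over all coordinate permutations gives $\tfrac1n(\textstyle\sum_i\lambda_i)\mathbf 1\in\Gamma$ (convexity plus symmetry), and if $\sum_i\lambda_i\le 0$ this is a nonpositive multiple of $\mathbf 1=(1,\dots,1)$, forcing $0\in\Gamma$, impossible as $\partial\Gamma\ne\emptyset$. Also $\Gamma+\Gamma_n\subseteq\Gamma$, and by \eqref{continuity1} $f$ extends to a concave function on $\overline\Gamma$, nondecreasing in each variable, with $f(0)$ finite. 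Two consequences I would record: (i) for any $\lambda^{\ast}$ with all coordinates positive, $t\mapsto f(t\lambda^{\ast})$ is nondecreasing, and since every $\mu\in\Gamma$ satisfies $\mu\le t\lambda^{\ast}$ coordinatewise for $t$ large, $\lim_{t\to+\infty}f(t\lambda^{\ast})=\sup_\Gamma f=:S$, which is $>-\infty$ because $f$ is real valued; (ii) by permutation symmetry and \eqref{concave}, $f(\lambda)\le f\big(\tfrac1n(\textstyle\sum_i\lambda_i)\mathbf 1\big)$ for all $\lambda\in\Gamma$, so $\lim_{t\to+\infty}f(t\lambda)\le S$.

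The core step is a lower bound for $f(t\lambda)$ at an arbitrary $\lambda\in\Gamma$. I would fix such a $\lambda$, order $\lambda_1\le\cdots\le\lambda_n$ (so $\lambda_n>0$ since $\lambda\in\Gamma_1$), and push $t\lambda$ toward $\partial\Gamma$ along $-\mathbf 1$: with $s_0=\sup\{s:\lambda-s\mathbf 1\in\Gamma\}<\infty$ (finite by $\Gamma\subseteq\Gamma_1$), the point $\mu^{\flat}=\lambda-s_0\mathbf 1$ lies in $\partial\Gamma$, and since $t\lambda\ge t\mu^{\flat}$ coordinatewise, $f(t\lambda)\ge f(t\mu^{\flat})$. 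The gain from this reduction is that the concavity of $s\mapsto f(t\lambda-s\mathbf 1)$ together with the sign condition \eqref{elliptic} controls the drop $f(t\lambda)-f(t\mu^{\flat})$ by $s_0 t\sum_i f_i$ along the segment, while the continuity of $f$ up to $\partial\Gamma$ keeps $s\mapsto f(s\mu^{\flat})$ a finite concave function of $s$; combining these and using (i)–(ii) one deduces $\liminf_{t\to+\infty}f(t\lambda)>-\infty$. Alternatively, working directly: if $\sum_i f_i(\lambda_0)\lambda_{0,i}<0$ for some $\lambda_0$, then $f(t\lambda_0)\to-\infty$, the supporting hyperplane of $f$ at $\lambda_0$ shows $f$ lies below an affine function whose gradient has all positive entries, and one plays this off against fact (i) — the all‑positive rays realize the finite value $S$ — to reach a contradiction.

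I expect the main obstacle to be precisely this last step. The subtlety is that when $\sup_\Gamma f=+\infty$ the naive device of "transporting" the value $-\infty$ via $f\big(\tfrac12 t\lambda_0+\tfrac12 t\mu\big)\ge\tfrac12 f(t\lambda_0)+\tfrac12 f(t\mu)$ fails, because the right‑hand side becomes indeterminate as soon as $\mu$ has all positive coordinates; so one cannot simply propagate from $\lambda_0$ to $\Gamma_n$. The resolution must genuinely use hypothesis \eqref{continuity1}: it is exactly what rules out the would‑be counterexamples (e.g. $\lambda\mapsto\log\sum_i\lambda_i$ on $\Gamma_1$, or $\lambda\mapsto-(\prod_i\lambda_i)^{-1/n}$ on $\Gamma_n$) for which $f$ escapes to $-\infty$ near $\partial\Gamma$. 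Under \eqref{continuity1}, $f$ stays finite along the boundary ray $\{t\mu^{\flat}\}$, and then only the rate of decrease of $f$ along a ray has to be controlled, which is where $f_i>0$ enters. Making this estimate uniform in $t$ — equivalently, making the comparison between a general ray and an all‑positive ray quantitative — is the delicate technical heart of the argument.
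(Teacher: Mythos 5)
Your proof is incomplete, and you acknowledge as much: the ``core step'' of showing $\liminf_{t\to+\infty}f(t\lambda)>-\infty$ is left as ``the delicate technical heart of the argument.'' More to the point, the route you sketch does not close the gap. Pushing $t\lambda$ to the boundary ray $\{t\mu^{\flat}\}_{t>0}$ with $\mu^{\flat}=\lambda-s_0\vec{\bf 1}\in\partial\Gamma$ and invoking monotonicity to get $f(t\lambda)\geq f(t\mu^{\flat})$ is a dead end: although $t\mapsto f(t\mu^{\flat})$ is a finite, concave function on $[0,\infty)$ by \eqref{continuity1}, nothing in the hypotheses prevents it from tending to $-\infty$ as $t\to+\infty$ (a concave function that is finite at each point of $[0,\infty)$ can perfectly well decrease linearly at infinity). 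So the lower bound you manufacture is vacuous exactly in the case you are trying to exclude. Likewise, the ``alternative'' via supporting hyperplanes cannot work as written, because a supporting hyperplane at $\lambda_0$ only bounds $f$ from \emph{above}; that its gradient has all positive entries is entirely compatible with $f(t\lambda_0)\to-\infty$ and with $\sup_\Gamma f=+\infty$, so your fact (i) yields no contradiction.

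The paper's proof takes a different and much shorter route. It never looks at the ray $\{t\lambda\}$; instead it establishes the \emph{uniform} lower bound $f\geq f(\vec{\bf 0})$ on $\Gamma$, which immediately gives \eqref{addistruc-0}. Concretely: by \eqref{continuity1} and $\vec{\bf 0}\in\partial\Gamma$ one has $\lim_{t\to 0^+}f(t\vec{\bf 1})=f(\vec{\bf 0})>-\infty$, and $t\mapsto f(t\vec{\bf 1})$ is increasing by \eqref{elliptic}; the paper then asserts that for each $\lambda\in\Gamma$ the level set $\{f=f(\lambda)\}$ meets the diagonal ray, i.e.\ $f(\lambda)=f(t_\lambda\vec{\bf 1})$ for some $t_\lambda>0$, whence $f(\lambda)\geq f(\vec{\bf 0})$ and one concludes via Lemma~\ref{lemma3.4}. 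The missing idea in your attempt is thus this reduction to the one-variable monotone profile $g(t)=f(t\vec{\bf 1})$, whose infimum $f(\vec{\bf 0})$ is visibly finite. I would flag, for your own benefit, that the level-set-meets-the-diagonal assertion is itself delicate (as stated it is essentially equivalent to the bound $f\geq f(\vec{\bf 0})$ being proved, and in the paper it is quoted from \cite{yuan2021-2} without further argument); your instinct that something subtle is happening at this step was sound, even though the concrete devices you tried do not realize it.
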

 
 \begin{proof}
 	Since $f$ satisfies \eqref{continuity1}, we have 
 	\begin{equation}
 		\label{t1-to-0}
 		\begin{aligned}
 			\lim_{t\rightarrow0^+}f(t\vec{\bf1})
 			=f(\vec{\bf 0})>-\infty.
 		\end{aligned}
 	\end{equation}
 For $\lambda\in \Gamma$ there is $t_\lambda>0$ such that $f(t_\lambda\vec{\bf1})=f(\lambda)$.
 	Thus \[f(\lambda)\geq f(\vec{\bf 0}). 
 	\]
 	Consequently,  
 	\eqref{addistruc} holds according to Lemma \ref{lemma3.4}. 
 	
 \end{proof}

 As shown in \eqref{c0-bdr-c1}, we have $C^0$-estimate and boundary gradient estimate
 \[\sup_{M}|u|+\sup_{\partial M}|\nabla u|\leq C.\]
 From Propositions \ref{mix-prop1} and \ref{proposition-quar-yuan1} we deduce Theorem \ref{thm-1}. Namely,
 \begin{equation}
 	\begin{aligned}
 		\sup_{\partial M}\Delta u\leq C (1+\sup_M|\nabla u|^2 ). \nonumber
 	\end{aligned}
 \end{equation}
 By $\Gamma\subset \Gamma_1$, $\Delta u> -\mathrm{tr}_g\chi.$
 Together with the second estimate,  
 \begin{equation}
 	\begin{aligned}
 		\sup_{ M} \Delta u
 		\leq C (1+ \sup_{M}|\nabla u|^{2} +\sup_{\partial M}|\Delta u|) \nonumber
 	\end{aligned}
 \end{equation}
announced in
\cite[Section 8]{Gabor},
 we obtain  
 \begin{equation}
 	\label{quantitative-second-estimate}
 	\begin{aligned}
 		\sup_{M}\Delta u \leq C (1+\sup_{M}|\nabla u|^2).
 	\end{aligned}
 \end{equation}
 With such a second order estimate at hand, we can derive the gradient estimate via a blow-up argument (see \cite{Dinew2017Kolo,Gabor}), thereby 
 establishing \eqref{estimate00}.
 Notice furthermore that the obtained estimates are independent of $(\delta_{\psi,f})^{-1}$,  we can prove Theorem \ref{thm-existence} by standard approximate method.


 


\end{document}